\newtheorem{thm}{Theorem}[section] 
\newtheorem{lemma}[thm]{Lemma}  
\newtheorem{proposition}[thm]{Proposition}  
\newtheorem{conj}[thm]{Conjecture}
\newtheorem{hyp}[thm]{Hypothesis}
\newtheorem{defn}{Definition}[section]
\newtheoremstyle{named}{}{}{\itshape}{}{\bfseries}{.}{.5em}{\thmnote{#3's }#1}
\theoremstyle{named}
\theoremstyle{definition}
\newtheorem{rmk}{Remark}
\title{Rotating Wave Solutions to Lattice Dynamical Systems I: The Anti-Continuum Limit}
\author{Jason J. Bramburger\\ Division of Applied Mathematics\\ Brown University \\ Providence, Rhode Island 02906\\ USA}
\date{} 
\begin{document} 

\maketitle

\begin{center}
{\em This is a post-peer-review, pre-copyedit version of an article published in the Journal of Dynamics and Differential Equations}
\end{center}

\abstract{Rotating waves are a fascinating feature of a wide array of complex systems, particularly those arising in the study of many chemical and biological processes. With many rigorous mathematical investigations of rotating waves relying on the model exhibiting a continuous Euclidean symmetry, this work is aimed at understanding these nonlinear waves in the absence of such symmetries. Here we will consider a spatially discrete lattice dynamical system of Ginzburg-Landau type and prove the existence of rotating waves in the anti-continuum limit. This result is achieved by providing a link between the work on phase systems stemming from the study of identically coupled oscillators on finite lattices to carefully track the solutions as the size of the lattice grows. It is shown that in the infinite square lattice limit of these phase systems that a rotating wave solution exists, which can be extended to the Ginzburg-Landau system of study here. The results of this work provide a necessary first step in the investigation of rotating waves as solutions to lattice dynamical systems in an effort to understand the dynamics of such solutions outside of the idealized situation where the underlying symmetry of a differential equation can be exploited.}

\section{Introduction}

Examples of rotating waves abound in nature and have been an intense area of rigorous mathematical investigation for many decades now. Spiral waves are a particularly important example of rotating waves which present themselves as striking visual patterns, whose formal study dates back to the work of Winfree in chemical reaction theory $\cite{Winfree,Winfree2}$. They have been shown to be associated with many serious phenomena in electrophysiological pathologies. This includes, but is not limited to, cortical spreading depression, hallucinations and ventricular fibrillation $\cite{Beaumont,Gorelova,Huang,Cardiac,KeenerSneyd,Cortical}$. With spiral waves occurring in such circumstances, it follows that they remain an active and intense area of study both in mathematics and throughout the physical sciences. 

Mathematical investigations of rotating waves have highlighted that the underlying symmetry of a differential equation plays a critical role in understanding the dynamics and bifurcations of these waves $\cite{Barkley,Barkley2,Victor, SSW3}$. Typical investigations of spiral waves focus on reaction-diffusion equations for exactly this reason. That is, consider the partial differential equation (PDE) of the form
\begin{equation} \label{RDE}
	\frac{\partial u}{\partial t} = \frac{\partial^2 u}{\partial x^2} + \frac{\partial^2 u}{\partial y^2} + \mathcal{F}(u),  
\end{equation}   
where $u = u(x,y,t): \mathbb{R}^2 \times \mathbb{R} \to \mathbb{R}^n$ and $\mathcal{F}:  \mathbb{R}^n \to  \mathbb{R}^n$, for some $n \geq 1$. Equation ($\ref{RDE}$) possesses an important symmetry property: if $u(x,y,t)$ is a solution to $(\ref{RDE})$ then so is
\begin{equation}
	\tilde{u}(x,y,t) = u(x\cos\theta - y\sin\theta + p_1, x\sin\theta + y\cos\theta + p_2, t), 
\end{equation}  
for any angle $\theta$ and translation $(p_1,p_2)\in\mathbb{R}^2$. These rotations and translations together form the special Euclidean group, often denoted SE$(2)$, and equation $(\ref{RDE})$ precisely is said to be invariant with respect to the action of this group on suitable function spaces. Using this symmetry property of the differential equation we can construct a centre-manifold reduction of the infinite-dimensional partial differential equation to a finite-dimensional system of ordinary differential equations near rotating wave solutions of $(\ref{RDE})$ $\cite{SSW,SSW2}$.  

Euclidean symmetry has been an excellent tool to describe the macroscopic behaviour of rotating waves, but one should note that in reality it is a modelling hypothesis which is, at best, an approximation. That is, bounded domains, heterogeneities and anisotropy are all important in physical models and violate a Euclidean symmetry assumption in the model. This has lead to some investigations of symmetry-breaking perturbations, which have demonstrated small, but measurable, discrepancies between systems with full symmetry and those with broken symmetry \cite{Victor2,Victor3,Victor4}.     

Our work here is part of a larger research program aimed at furthering the understanding of the behaviour of rotating waves in the absence of Euclidean symmetry. More precisely, we aim to determine how the dynamic behaviour of the rotating waves differ when Euclidean symmetry is absent. As an alternative to considering symmetry breaking perturbations, this line of questioning has led to considering a countably infinite system of coupled ordinary differential equations, termed a lattice dynamical system (LDS), of the form  
\begin{equation} \label{LDS}
	\dot{u}_{i,j} = \alpha(u_{i+1,j} + u_{i-1,j} + u_{i,j+1} + u_{i,j-1} - 4u_{i,j}) + f(u_{i,j}),
\end{equation}	
where $u_{i,j} = u_{i,j}(t):\mathbb{R} \to \mathbb{R}^n$, $n\geq 1$, for each $(i,j) \in \mathbb{Z}^2$ and $\dot{x} = dx/dt$. Here $\alpha > 0$ is regarded as the strength of coupling between neighbouring elements in the integer lattice and $f:\mathbb{R}^n \to \mathbb{R}^n$ is a general nonlinearity. One can see that in moving from the partial differential equation context of $(\ref{RDE})$ to that of the lattice dynamical system $(\ref{LDS})$ we have replaced the continuous two-dimensional spatial medium with a grid, or lattice, which moves our problem into a discrete spatial setting. The nearest-neighbour coupling of system $(\ref{LDS})$ can be derived as the leading order of a typical finite-difference approximation of the second order diffusion differential operator. One may also consider more complicated connection topologies, as is being done in some recent works on finite lattices $\cite{DeVille,Udeigwe}$, but since the long term objective is to explore solutions to differential equations in discrete space versus continuous space, the nearest-neighbour connections of system $(\ref{LDS})$ will suffice. 

The most important take-away from system $(\ref{LDS})$ is that it does not satisfy continuous Euclidean symmetry invariance, and therefore provides a basis for the inspection of rotating wave dynamics in the absence of Euclidean symmetry. Although, one should note that system $(\ref{LDS})$ does in fact retain the discrete symmetries of the integer lattice $\mathbb{Z}^2$; that is, discrete translations in both the horizontal and vertical directions along with a four-fold rotational symmetry. The fact that these symmetries are discrete prevents the use of typical methods employed in the continuous spatial setting, but in this work we will see that they can be exploited in other ways to demonstrate the existence of rotating wave solutions to lattice dynamical systems. 

In addition to lattice systems being a prototype for spatial discretizations of PDEs, they have proven extremely useful in describing numerous phenomena irrespective of their continuous space counterparts. LDSÕs arise naturally in various physical settings such as material science, in particular metallurgy, where LDSs have been used to model solidification of alloys $\cite{Cahn2,Cook}$, chemical reactions $\cite{Erneux}$, optics $\cite{Firth}$ and biology; particularly with chains of coupled oscillators arising in models of neural networks $\cite{Ermentrout,ErmentroutKopell2}$. For these applications and many more, LDSs have therefore proven to be an interesting area of research in their own right.

\subsection{Traveling Wave Solutions to LDSs}  

Let us begin by illustrating an important and motivating example. Consider the reaction-diffusion equation in one spatial dimension
 \begin{equation} \label{RDE1D} 
 	\frac{\partial u}{\partial t} = d\cdot\frac{\partial^2 u}{\partial x^2} + u(1- u)(a - u),	
 \end{equation} 
 where $d > 0$, $a \in (0,1)$ and $u = u(x,t) : \mathbb{R}\times\mathbb{R} \to \mathbb{R}$. Finding traveling wave solutions to $(\ref{RDE1D})$ requires determining the existence of a solution of the form $u(x,t) = \phi(x - ct)$, where $c \in \mathbb{R}$ is the wave speed and $\phi:\mathbb{R} \to \mathbb{R}$ is the wave profile, satisfying appropriate boundary conditions. Much work has been done to demonstrate the existence of these desired solutions to equation $(\ref{RDE1D})$, beginning with the pioneering work of Fife and McLeod $\cite{Fife}$. One finds that the solutions $\phi$ not only exist, but further exhibit an explicit dependence between the wave speed, $c$, and the parameter $a$. Particularly, at the critical parameter value $a = 1/2$ the waves have zero speed and thus fail to propagate through the continuous spatial medium. This phenomenon is often referred to as {\em propagation failure}.  
 
The analogous lattice dynamical system to $(\ref{RDE1D})$ is 
\begin{equation} \label{1DLDS}
	\dot{u} = \alpha(u_{i+1} + u_{i-1} - 2u_{i}) + u_{i}(1- u_{i})(a - u_{i}), \ \ \ \ \ i\in\mathbb{Z}. 
\end{equation} 
Traveling wave solutions now take the form $u_{i}(t) = \phi(i - ct)$, again for a wave profile $\phi:\mathbb{R} \to \mathbb{R}$ and appropriate boundary conditions. Searching for traveling wave solutions to $(\ref{1DLDS})$ requires considerably different techniques to that of the continuous spatial medium, with the existence of such solutions being demonstrated most notably by Zinner $\cite{Zinner}$. Here the wave speed has not been explicitly related to the parameter $a$, although Keener has demonstrated that when coupling is sufficiently weak there are entire open regions in parameter space which lead to propagation failure $\cite{Keener}$. 

By moving from one spatial dimension to two spatial dimensions we arrive at further comparisons between the discrete and continuous spatial settings. For example, isotropy of $(\ref{RDE})$ implies that the direction in which a traveling wave propagates does not effect the qualitative dynamics of the wave. This is not necessarily the case in the discrete spatial setting, since it has been shown that the direction of propagation can play a direct role in determining the waves ability to propagate through the discrete spatial medium $\cite{Cahn}$.    

To date there have been numerous studies on the existence and properties of traveling wave solutions to lattice dynamical systems, with a particular emphasis on fronts which fail to propagate through the discrete spatial medium $\cite{Cahn,Elmer,Hupkes,Keener}$. Our work is therefore motivated by the many investigations of traveling waves demonstrating the slight, but measurable, differences in dynamics between continuous and discrete space. That is, it has become apparent that systems which do not satisfy a continuous Euclidean symmetry assumption can in some cases provide qualitatively different traveling wave solutions to the Euclidean invariant case, and hence one is naturally led to question how these investigations can be extended to the study of rotating waves. Therefore this work here aims to further this line of investigation by considering rotational propagation, providing the necessary existence results, similarly to what Zinner has done for traveling waves $\cite{Zinner}$.

\subsection{The Model}

Our investigation begins by considering the simplified cubic Ginzburg-Landau reaction-diffusion system, written in terms of a single complex variable $z(x,y,t):\mathbb{R}^2 \times \mathbb{R} \to \mathbb{C}$, of the form 
\begin{equation} \label{RDELambdaOmega}
	\frac{\partial z}{\partial t} = D\bigg(\frac{\partial^2 z}{\partial x^2} + \frac{\partial^2 z}{\partial y^2}\bigg) + (1 + {\rm i}\omega)z - z|z|^2,	
\end{equation}
where ${\rm i}=\sqrt{-1}$ is the imaginary constant, $D > 0$ is a diffusion coefficient and $\omega\in\mathbb{R}$. These reaction-diffusion equations are well-known to arise as the lowest order perturbation of any reaction-diffusion system near a Hopf bifurcation $\cite{Cohen}$. Systems such as $(\ref{RDELambdaOmega})$ have become an archetype for oscillatory behaviour in reaction-diffusion systems, and most importantly to our study here is that PDEs of this type are well-known to exhibit spiral wave solutions $\cite{Cohen,Greenberg,Kopell2,Troy}$. Therefore system $(\ref{RDELambdaOmega})$ will provide an optimal starting point for the mathematical investigation presented here. 

The spatially discrete analogue of $(\ref{RDELambdaOmega})$ takes the form
\begin{equation} \label{ComplexLattice}
	\dot{z}_{i,j} = \alpha \sum_{i',j'} (z_{i',j'} - z_{i,j}) + (1 + {\rm i}\omega)z_{i,j} - z_{i,j}|z_{i,j}|^2, \ \ \ \ \ \ \ (i,j) \in\mathbb{Z}^2,
\end{equation}
where the sum represents the coupling terms of $(\ref{LDS})$ over all nearest-neighbours of the lattice point $(i,j)$. Since systems of type $(\ref{RDELambdaOmega})$ provided the setting for the first rigorous inspection of spiral wave solutions to PDEs $\cite{Cohen}$, it is therefore natural to consider $(\ref{ComplexLattice})$ for the investigation of spiral waves in LDSs. Although there have been some studies of the systems of type $(\ref{ComplexLattice})$ on finite lattices which allow for the formation of important conjectures about the infinite lattice $\cite{DarkVortex,ErmentroutLambdaOmega}$, the case of an infinite lattice remains to be inspected. By posing our system on an infinite lattice we gain insight into the behaviour of the solution as the size of the lattice grows without bound and understand the behaviour of solutions in the absence of boundary conditions.       

The parameter $\alpha$ is typically referred to as the {\em coupling coefficient} and represents the strength of the effect of the spatial discretization. The limit $\alpha \to \infty$ corresponds to a return to the continuum equation $(\ref{RDELambdaOmega})$, whereas the limit $\alpha \to 0^+$ corresponds to the so-called {\em anti-continuum limit}. In the interest of exploring the dynamics of rotating waves in a fully spatially discrete setting we will investigate solutions in the anti-contiuum limit here.    

To properly analyze system $(\ref{ComplexLattice})$ we write
\begin{equation} \label{ComplexPolarForm}
	z_{i,j} = r_{i,j}e^{{\rm i}(\omega t + \theta_{i,j})}, 
\end{equation}	
for each $(i,j)$, where $r_{i,j} =r_{i,j}(t)$ and $\theta_{i,j} = \theta_{i,j}(t)$. System ($\ref{ComplexLattice}$) can now be written as
\begin{subequations}\label{PolarLattice}
	\begin{equation}\label{PolarLatticeRadial}
		\dot{r}_{i,j} = \alpha\sum_{i',j'} (r_{i',j'}\cos(\theta_{i',j'} - \theta_{i,j}) - r_{i,j}) + r_{i,j}(1 - r_{i,j}^2),
	\end{equation}
	\begin{equation}\label{PolarLatticePhase}
		\dot{\theta}_{i,j} =  \alpha\sum_{i',j'} \frac{r_{i',j'}}{r_{i,j}}\sin(\theta_{i',j'} - \theta_{i,j}), \ \ \ (i,j) \in \mathbb{Z}^2.
	\end{equation}
\end{subequations} 
The goal is to eventually provide a proof of rotating/spiral wave solutions to system $(\ref{ComplexLattice})$ using the polar decomposition $(\ref{PolarLattice})$, and our work here provides a necessary intermediate step to establishing this result. In this work we will restrict ourselves to investigating the phase components $(\ref{PolarLatticePhase})$ in the anti-continuum limit $\alpha \to 0^+$, where one can see that the problem now becomes one of singular perturbation. In Section $\ref{sec:Model}$ we provide the exact coupled phase system we wish to study in this work, and demonstrate a complete understanding of this phase model is imperative to understanding the full polar lattice $(\ref{PolarLatticePhase})$.

\subsection{Outline of the Paper}

This paper is organized as follows. In Section~\ref{sec:RotWave} we precisely define what a rotating wave solution to equation (\ref{ComplexLattice}) is, and then in Section $\ref{sec:Model}$ we will properly describe how to obtain rotating wave solutions to $(\ref{ComplexLattice})$ in the anti-continuum limit. In particular, we will introduce the system of coupled phase equations which form the central equations of interest in this work. Section $\ref{sec:FiniteLattice}$ gives an overview of known results for rotating waves to coupled phase equations on finite square lattices. Then Section $\ref{sec:InfiniteLattice}$ carefully extends the results of the finite square lattice to a rotating wave solution to our system of coupled phase equations over the infinite lattice $\mathbb{Z}^2$. The results of Section $\ref{sec:InfiniteLattice}$ then give the existence of rotating wave solutions to $(\ref{ComplexLattice})$ in the anti-continuum limit. Section $\ref{sec:Persistence}$ is broken down into two subsections. The first subsection aims to formulate a conjecture into the persistence of rotating wave solutions to $(\ref{ComplexLattice})$ away from the anti-continuum limit, whereas the second subsection of Section $\ref{sec:Persistence}$ discusses the many technical difficulties in formally extending the existence work of this paper out of the anti-continuum limit. This is all done in the service that the reader fully comprehends just how nontrivial of a task the persistence results become when dealing with the infinite lattice setting of this work.

\section{Rotating Wave Solutions in Lattice Systems} \label{sec:RotWave} 

The central question of this work is to obtain rotating wave solutions to the system $(\ref{ComplexLattice})$. In the continuous spatial setting of partial differential equations, rotating waves are defined so that their temporal evolution is equivalent to a spatial rotation. To properly define a rotating wave solution in this discrete spatial context we will make use of the rotation operator acting on the indices of the lattice given by
\begin{equation} \label{RotationOperator}
	R(z_{i,j}) = z_{j,1-i},
\end{equation}  
where we rotate the lattice clockwise through an angle of $\pi/2$ about a theoretical centre cell at $i=j=1/2$. This theoretical centre will act as the centre of rotation for our rotating wave solution, although due to the translational invariance of the lattice this centre can be chosen to lie between any square arrangement of cells and still give a rotating wave solution. The effect this operator has on the closest cells to its centre of rotation is shown in Figure $\ref{fig:Rotation}$. For rotations through the angle $\pi$ we merely apply $R$ to itself, denoted $R^2$. Similarly, for clockwise rotations through the angle $3\pi/2$ (or counterclockwise rotations through the angle $\pi/2$) we apply $R$ to itself three times, denoted $R^3$. This rotation operator works only at the lattice level, and therefore does not alter the internal dynamics of the individual cells. That is, we merely move cells around in the lattice with this operator, but never alter their time-dependent dynamics.   

\begin{figure} 
	\centering
	\includegraphics[height = 5cm]{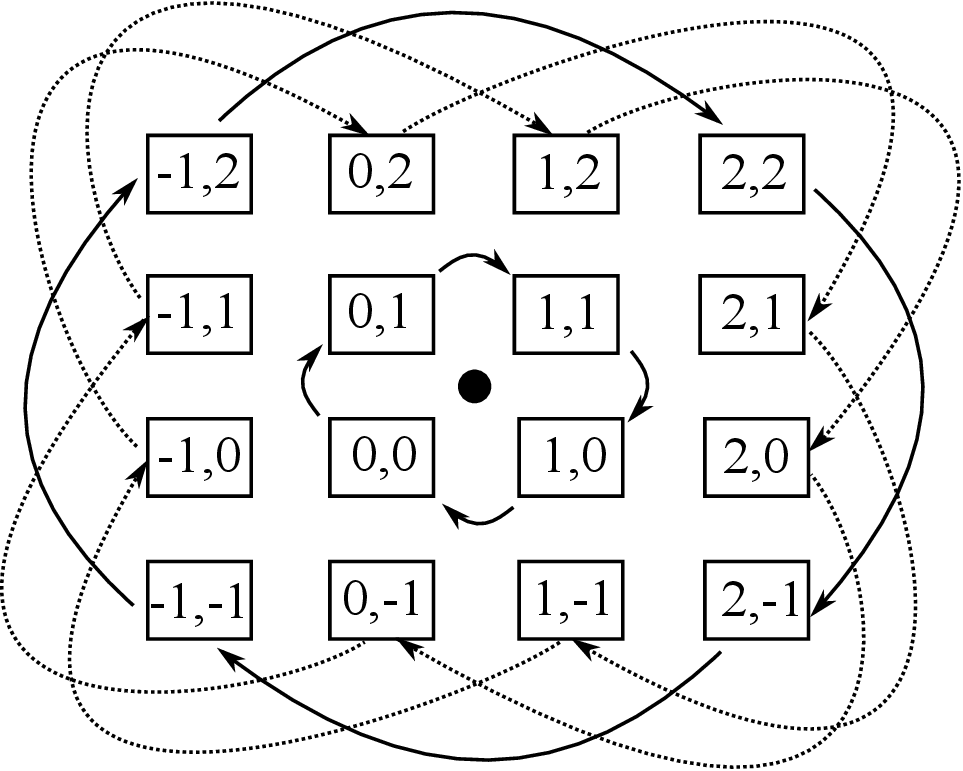}
	\caption{A diagram showing how the rotation operator defined in ($\ref{RotationOperator}$) effects elements of the lattice. The operator rotates lattice points by $\pi/2$ about a theoretical centre cell at $i=j=1/2$, represented by the dot in the centre of the diagram.}
	\label{fig:Rotation}
\end{figure} 

Having now defined an analogous spatial rotation in the discrete spatial setting, we turn to the central definition of this work, which details that rotating elements of the lattice about the centre of rotation simply leads to a phase advance of a quarter period. That is, as in the continuous spatial setting we seek a solution for which spatial rotation is equivalent to temporal translation. We will work to find a time-periodic solution defined by this rotational symmetry. 

\begin{defn} \label{def:RotWaveLDS}
	A {\bf rotating wave solution} of system $(\ref{ComplexLattice})$, denoted $\{z_{i,j}(t)\}_{(i,j)\in\mathbb{Z}^2}$, is a periodic solution with period $T > 0$ such that for all $(i,j) \in \mathbb{Z}^2$ and $t \in \mathbb{R}$ we have
	\begin{equation} \label{RotatingSymmetry}
		R(z_{i,j}(t)) = z_{i,j}(t + T/4).
	\end{equation}
\end{defn}

With the ansatz $z_{i,j}(t) = r_{i,j}(t)e^{i(\omega t + \theta_{i,j}(t))}$ introduced in (\ref{ComplexPolarForm}), we can see that obtaining steady-state solutions to (\ref{PolarLattice}), denoted $\{(\bar{r}_{i,j},\bar{\theta}_{i,j})\}_{(i,j)\in\mathbb{Z}^2}$, satsifying 
\begin{equation}
	\begin{split}
		R(r_{i,j}(t)) &= r_{i,j}(t), \\
		R(\theta_{i,j}(t)) &= \theta_{i,j}(t) + \frac{\pi}{2},
	\end{split}
\end{equation}
for all $t \geq 0$ and $(i,j) \in \mathbb{Z}^2$, leads to a rotating wave solution to (\ref{ComplexLattice}). Indeed, we can see that 
\begin{equation}
	\begin{split}
	R(z_{i,j}(t)) &= R(\bar{r}_{i,j})e^{i(\omega t + R(\bar{\theta}_{i,j})} \\ 
	&= \bar{r}_{i,j}e^{i(\omega t + \bar{\theta}_{i,j} + \frac{\pi}{2})} \\ 
	&= \bar{r}_{i,j}e^{i(\omega (t + \frac{\pi}{2\omega}) + \bar{\theta}_{i,j}} \\ 
	&= z_{i,j}\bigg(t + \frac{\pi}{2\omega}\bigg),  
	\end{split}
\end{equation}
for all $(i,j) \in \mathbb{Z}^2$, so that we have satisfied the Definition~\ref{def:RotWaveLDS} with period $T = \frac{2\pi}{\omega}$. In the following section we will detail exactly how we wish to obtain these steady-state solutions that lead to such a rotating wave solution to (\ref{ComplexLattice}).

\section{Solutions in the Anti-Continuum Limit} \label{sec:Model} 

We proceed with the polar decomposition $(\ref{PolarLattice})$ of $(\ref{ComplexLattice})$ detailed in the introduction. Our goal is to find time-periodic rotating wave solutions to $(\ref{ComplexLattice})$, and thus from our discussion in the previous section we reduce to searching for non-trivial steady-state solutions of the polar variables equations $(\ref{PolarLattice})$. That is, we are required to solve the infinite system of nonlinear equations given by
\begin{equation} \label{0Eqns_old}
	\begin{split}
	\begin{aligned}
		&0 = \alpha\sum_{i',j'} (r_{i',j'}\cos(\theta_{i',j'} - \theta_{i,j}) - r_{i,j}) + r_{i,j}(1 - r_{i,j}^2), \\
		&0 =  \alpha\sum_{i',j'} \frac{r_{i',j'}}{r_{i,j}}\sin(\theta_{i',j'} - \theta_{i,j}),
	\end{aligned}
	\end{split}	
\end{equation} 
for $\alpha \geq 0$ and $(i,j) \in\mathbb{Z}^2$. As mentioned in the introduction, our interest lies in the anti-continuum limit $\alpha \to 0^+$. Simply evaluating (\ref{0Eqns_old}) at $\alpha = 0$ will of course trivially solve the phase equations, but this gives no indication of what the solutions should look like for sufficiently small $\alpha > 0$, which is the goal of this work. Therefore, solutions of (\ref{0Eqns_old}) in the anti-continuum limit should really be interpreted as the limit of solutions with small $\alpha > 0$ as $\alpha \to 0^+$, as opposed to solutions of (\ref{0Eqns_old}) evaluated at $\alpha = 0$. Hence, solving systems in the anti-continuum limit bears a striking resemblance to obtaining and understanding the flow on slow manifolds in fast-slow dynamical system.     

Since $\alpha$ appears only as a a multiplicative constant in the second system of equations, searching for nontrivial solutions with $\alpha \geq 0$ to the polar variables equations $(\ref{PolarLattice})$ requires solving  
\begin{equation} \label{0Eqns}
	\begin{split}
	\begin{aligned}
		&0 = \alpha\sum_{i',j'} (r_{i',j'}\cos(\theta_{i',j'} - \theta_{i,j}) - r_{i,j}) + r_{i,j}(1 - r_{i,j}^2), \\
		&0 =  \sum_{i',j'} \frac{r_{i',j'}}{r_{i,j}}\sin(\theta_{i',j'} - \theta_{i,j}), 
	\end{aligned}
	\end{split}	
\end{equation} 
for each $(i,j) \in \mathbb{Z}^2$. As previously noted, any steady-state solutions $\{(\bar{r}_{i,j},\bar{\theta}_{i,j})\}_{(i,j)\in\mathbb{Z}^2}$ leads to solutions of the Ginzburg-Landau system $(\ref{ComplexLattice})$ of the form
\begin{equation} \label{zSoln}
	z_{i,j}(t) = \bar{r}_{i,j}e^{{\rm i}(\omega t + \bar{\theta}_{i,j})},
\end{equation}
for all $(i,j) \in \mathbb{Z}^2$. That is, each $z_{i,j}(t)$ is oscillating with a frequency of $2\pi/\omega$ but differ through time-independent phase-lags $\bar{\theta}_{i,j}$ and magnitudes $\bar{r}_{i,j}$.  

One can see that upon letting $\alpha \to 0^+$ in (\ref{0Eqns}) we have that the radial components not only decouple from their nearest-neighbours, but also from their associated phase variables. This leaves one to solve the simple polynomial equation
\begin{equation} \label{RadialRoots}
	r_{i,j}(1 - r_{i,j}^2) = 0.
\end{equation}
The only positive solution to $(\ref{RadialRoots})$ is given by $r_{i,j} = 1$ for all $(i,j)\in\mathbb{Z}^2$. This then reduces $(\ref{0Eqns})$ to solving 
\begin{equation} \label{PolarLattice2}
	\sum_{i',j'} \sin(\theta_{i',j'} - \theta_{i,j}) = 0,
\end{equation}	 
for each $(i,j)$, which correspond to the aforementioned time-independent phase-lags for a potential solution of the form $(\ref{zSoln})$. More precisely, solutions to system $(\ref{PolarLattice2})$ provide a leading order expansion of solutions to the full Ginzburg-Landau system with $\alpha \to 0^+$ of the form 
\begin{equation} \label{ACSoln}
	z_{i,j}(t) = e^{{\rm i}(\omega t + \bar{\theta}_{i,j})}. 	
\end{equation} 
From our work in the previous section, we can ensure that (\ref{ACSoln}) is a rotating wave by obtaining solutions $\{\bar{\theta}_{i,j}\}_{(i,j)\in\mathbb{Z}}$ to system (\ref{PolarLattice2}) such that 
\begin{equation}
	R(\bar{\theta}_{i,j}) = \bar{\theta}_{i,j} + \frac{\pi}{2}.
\end{equation}

In this work we solve $(\ref{PolarLattice2})$ by considering more general coupling functions that retain the necessary characteristics of the sine function. Specifically we will work to solve equations of the form
\begin{equation} \label{HZeros}
	\sum_{i',j'} H(\theta_{i',j'} - \theta_{i,j}) = 0,
\end{equation}  
for all $(i,j)\in\mathbb{Z}^2$ for coupling functions $H:S^1 \to S^1$ satisfying the following hypothesis:

\begin{hyp} \label{Hyp1} 
The coupling function $H: S^1 \to S^1$ is such that
\begin{itemize}
	\item $H \in C^\infty(S^1)$,
	\item $H(x + 2\pi) = H(x)$ for all $x\in S^1$,
	\item $H(-x) = -H(x)$ for all $x\in S^1$,
	\item $H'(x) > 0$ for all $x \in (\frac{-\pi}{2},\frac{\pi}{2})$.
\end{itemize} 
\end{hyp}

\noindent The reader should further note that the final two conditions of Hypothesis $\ref{Hyp1}$ can be combined to see that we necessarily have $H(0) = 0$ and
\begin{equation} \label{Positivity}
	H(x) > 0,\ \ \ \ \ x\in(0,\frac{\pi}{2}].
\end{equation} 
In the coming sections we will see how this set of conditions is minimal in that each is necessary for the results obtained in this work.   

\begin{rmk} Throughout this manuscript we will simply refer to a solution $\{\bar{\theta}_{i,j}\}_{(i,j)\in\mathbb{Z}}$ of the system of equations (\ref{HZeros}) satisfying the symmetry condition
\begin{equation}
	R(\bar{\theta}_{i,j}) = \bar{\theta}_{i,j} + \frac{\pi}{2}
\end{equation}
as a {\bf rotating wave}. The reason for this is due to this correspondence with the polar form (\ref{ACSoln}) and the discussion above. This will allow for the consideration of only the phase equation (\ref{HZeros}) throughout the following sections, thus adding some clarity in conveying the results.
\end{rmk}

As a brief aside, it should be noted that solving systems of type $(\ref{HZeros})$ is not just relevant to our study in this work, but also can be related to the study of identically coupled oscillators as well. That is, consider the system of coupled phase equations of the form
\begin{equation} \label{PhaseLDS}
	\dot{\theta}_{i,j}(t) = \omega + \sum_{i',j'} H(\theta_{i',j'}(t) - \theta_{i,j}(t)),  \ \ \ \ \ \ \ (i,j) \in\mathbb{Z}^2,
\end{equation} 
where $\theta_{i,j}:\mathbb{R}^+ \to S^1$ for each $(i,j)\in\mathbb{Z}^2$. By introducing the ansatz $\theta_{i,j}(t) = \omega t + \bar{\theta}_{i,j}$, where each oscillator has the same frequency but differs through the time-independent phase-lag $\bar{\theta}_{i,j}$, we reduce $(\ref{PhaseLDS})$ to solving 
\begin{equation}
	\sum_{i',j'} H(\bar{\theta}_{i',j'} - \bar{\theta}_{i,j}) = 0,
\end{equation} 
an equivalent system to $(\ref{HZeros})$ above. The coupled phase model $(\ref{PhaseLDS})$ can come as a generalization of the celebrated Kuramoto model which has widespread applications, particularly in neuroscience $\cite{Cumin, Kuramoto}$. There has been an extensive body of work on one-dimensional lattices, or chains, of coupled systems of phase equations with similar coupling functions in both the finite and infinite settings $\cite{ErmentroutKopell,ErmentroutKopell2}$. The study of two-dimensional lattices remains mostly unexplored, with the exception of some work on the finite square lattice $\cite{ErmentroutRen,ErmentroutSpiral}$.

Finally, one should note that the lattice structure and nearest-neighbour connections give systems $(\ref{ComplexLattice})$ and $(\ref{HZeros})$ a natural underlying graph theoretic geometry which we exploit throughout this work. We recall that nearest-neighbours are one step along the lattice from each other and then extend this notion inductively so that we say an element is $k$ steps away from another element if the shortest path along the lattice via nearest-neighbour connections requires us to move through exactly $k$ lattice points. For example, the cells with indices $(1,0)$ and $(2,2)$ are said to be 3 steps away from each other with an example of a shortest path between these indices via nearest-neighbour connections given by $(1,0)\to(2,0)\to(2,1)\to(2,2)$. Aside from this geometric perspective, this can be quantified analytically by saying that two elements indexed by the lattice points $(i_1,j_1)$ and $(i_2,j_2)$ are $k$ steps away from each other if 
\begin{equation}
	|i_1 - i_2| + |j_1 - j_2| = k.
\end{equation}  

This notion of distance along the lattice structure has important implications for systems of type $(\ref{PhaseLDS})$, which are used in the following section in order to solve $(\ref{HZeros})$ on a finite lattice. We see that the first derivative of any oscillator depends on the value of all nearest-neighbours as well as itself. It follows that the second derivative of any oscillator will depend on the derivative of each of its nearest-neighbours and the derivative of itself, implying that by the form of our LDS, we have the second derivative of any oscillator depending on the values of all oscillators two steps or less from it. More generally, the $k$th derivative of any oscillator depends on the value of all oscillators $k$ steps or less from it. This interconnectivity between oscillators will become crucial throughout this work and provide the basis for much of the work carried out.

\section{Rotating Waves on Finite Lattices} \label{sec:FiniteLattice} 

It was shown by Ermentrout and Paullet in $\cite{ErmentroutSpiral}$ that there exists rotating wave solutions on finite square lattices. Here we will review this work and demonstrate how it can be extended to give solutions on the infinite lattice. We obtain solutions to the finite lattice version of $(\ref{HZeros})$ as steady-states to a dynamical system. Begin by fixing an integer $N \geq 2$ to consider the truncated phase equations
\begin{equation} \label{FinitePhase}
	\dot{\theta}_{i,j} = \sum_{i',j'} H(\theta_{i',j'} - \theta_{i,j}),
\end{equation}   
where $1-N \leq i,j \leq N$ and the sum is over the nearest-neighbours of the cell $(i,j)$ on the finite square integer lattice with side lengths $2N$. Note that not all cells have four neighbours in this case because of the truncation to a finite lattice. That is, those cells along the edges have at most three nearest-neighbours included in the sum, and if they are a corner then there will be only two nearest-neighbours in the sum. 

We will follow the arguments laid out in $\cite{ErmentroutSpiral}$ to show that there exists a steady-state solution to $(\ref{FinitePhase})$ satisfying the definition of a rotating wave given above. Hence, we seek a steady-state solution with the symmetry shown in Figure $\ref{fig:PhaseSolution}$, which satisfies the condition $R(\bar{\theta}_{i,j}) = \bar{\theta}_{i,j} + \frac{\pi}{2}$ plus an additional symmetry (due to the fact that $H$ is considered to be an odd function) which will be discussed shortly. Ermentrout and Paullet detail that we may therefore reduce the number of equations from $4N^2$ to $\frac{1}{2}N(N-1)$ by focusing on those whose indices belong to the set
\[
	\Lambda_N = \{(i,j): 1 \leq j < i \leq N,\}
\] 
which we refer to as the {\em reduced system}. The reduced system is represented by the shaded cells in Figure $\ref{fig:PhaseSolution}$. 

\begin{figure} 
	\centering
	\includegraphics[width = 9cm]{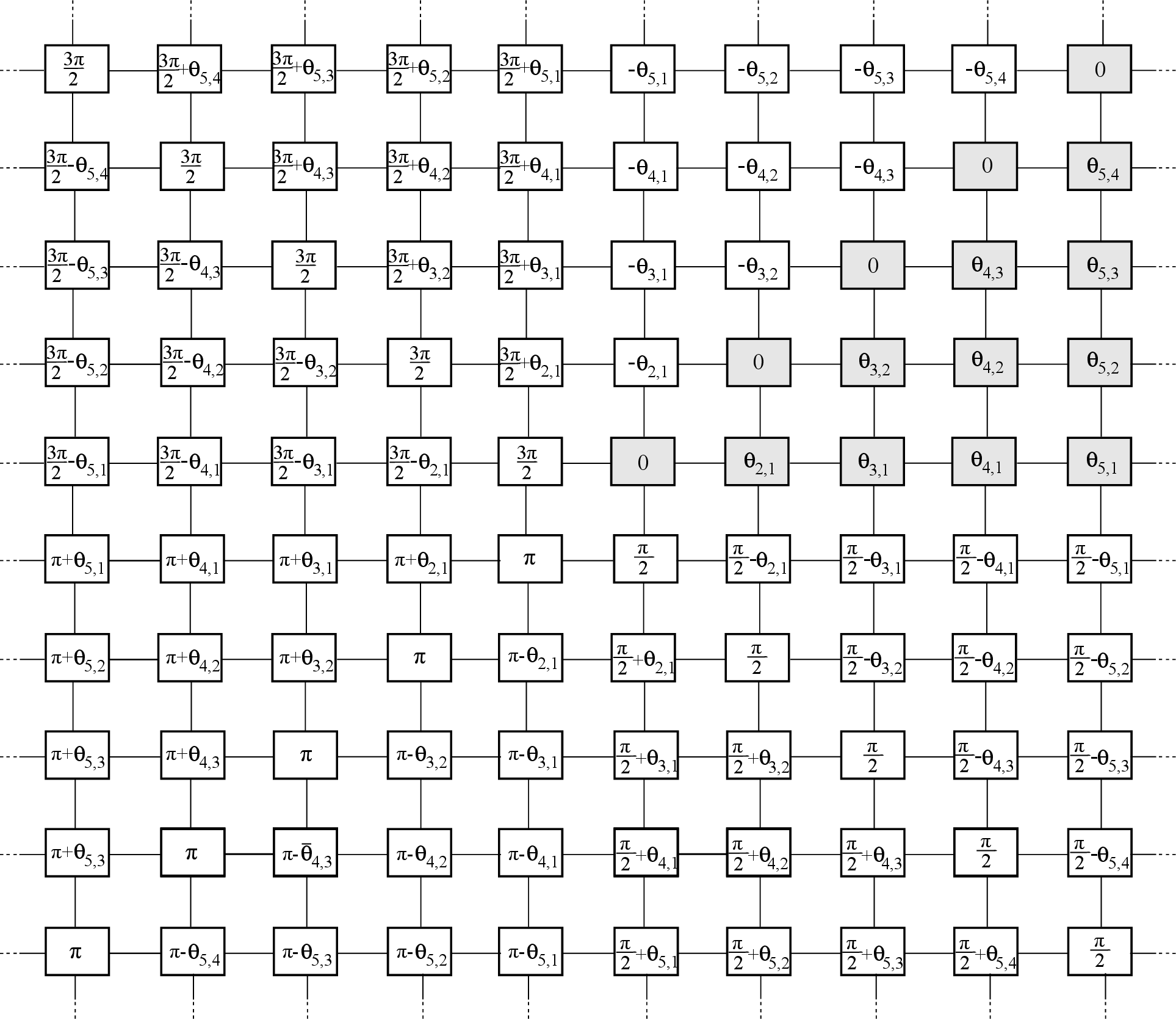}
	\caption{Symmetry of the phase-locked solution on the finite lattice. The shaded cells represent the reduced system.}
	\label{fig:PhaseSolution}
\end{figure}  

Upon applying these reductions to the system we arrive at the problem of finding a steady-state to the system of equations given by
\begin{equation} \label{ReducedFinitePhase}
	\dot{\theta}_{i,j} = \sum_{i',j'} H(\theta_{i',j'} - \theta_{i,j}),\ \ \ \ \ (i,j)\in \Lambda_N.	
\end{equation}
with initial conditions $\theta_{i,j}(0) = \frac{\pi}{4}$, $(i,j)\in \Lambda_N$. We also impose the boundary conditions $\theta_{i,i} = 0$ and $\theta_{i,0} = \frac{\pi}{2} - \theta_{i,1}$ for $1 \leq i \leq N$ which are reflected in Figure $\ref{fig:PhaseSolution}$ for reference. These boundary conditions lead to certain properties which are used to show the existence of a steady-state solution. In particular, the cells directly below the diagonal ($j = i-1$) are connected to two diagonal elements (above and to the left) which reduces their differential equations to
\begin{equation} \label{SpecialForm1}
	\begin{split}
	\begin{aligned}
		\dot{\theta}_{i,i-1} &= H(0 - \theta_{i,i-1}) + H(0 - \theta_{i,i-1}) + H(\theta_{i+1,i-1} - \theta_{i,i-1}) + H(\theta_{i,i-2} - \theta_{i,i-1}) \\
		&= -2H(\theta_{i,i-1}) + H(\theta_{i+1,i-1} - \theta_{i,i-1}) + H(\theta_{i,i-2} - \theta_{i,i-1}).
	\end{aligned}
	\end{split}
\end{equation}  
Here we have used the odd symmetry of the coupling function. Also, the cells in the first row of the reduced system ($j=1$) have a special term due to their connection with the boundary terms at $j = 0$ given by
\begin{equation} \label{SpecialForm2}
	\begin{split}
	\begin{aligned}
		\dot{\theta}_{i,1} &= H\bigg(\frac{\pi}{2} - \theta_{i,1} - \theta_{i,1}\bigg) + H(\theta_{i,2} - \theta_{i,1}) + H(\theta_{i-1,1} - \theta_{i,1}) + H(\theta_{i+1,1} - \theta_{i,1}) \\
		&= H\bigg(\frac{\pi}{2} -2\theta_{i,1}\bigg) + H(\theta_{i,2} - \theta_{i,1}) + H(\theta_{i-1,1} - \theta_{i,1}) + H(\theta_{i+1,1} - \theta_{i,1}).
	\end{aligned}
	\end{split}
\end{equation} 

\begin{rmk}
One will notice that there is a slight difference in the initial conditions between the work in $\cite{ErmentroutSpiral}$ and of that which is given here. In the former, the initial conditions are taken to be $\theta_{i,j}(0) = 0$, whereas here we will take them to be $\theta_{i,j}(0) = \frac{\pi}{4}$. This does not lead to any significantly different analysis, but will be useful in extending these results to the infinite lattice. 
\end{rmk}

The proof that a rotating wave solution exists on the finite lattice is broken down into two lemmas which together show that the trajectories are decreasing and bounded below for all $t > 0$. These lemmas together show that the trajectories therefore tend to an equilibrium as $t \to \infty$. The proof will only be briefly summarized here as it is nearly the same as that given by Ermentrout and Paullet, with the only distinction being that of a sign change due to the choice of initial conditions. Furthermore, an understanding of the methods employed in the proof on the finite lattice leads to a greater understanding of those used to extend to the infinite lattice, particularly in the proof of Lemma~\ref{lem:Horizontal} which closes out this section.   

\begin{lemma} \label{lem:ErmentroutLemma1} 
	For each fixed $N\geq 2$, given $\theta_{i,j}(0) = \frac{\pi}{4}$, there exists a $t_0 > 0$ such that $\dot{\theta}_{i,j}(t) < 0$ for all $0 < t < t_0$ and $(i,j) \in \Lambda_N$. 
\end{lemma}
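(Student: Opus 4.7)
The plan is to evaluate $\dot{\theta}_{i,j}$ and its higher time derivatives at $t = 0$, identify for each reduced-system cell the lowest-order non-vanishing derivative together with its sign, and then conclude via Taylor's theorem on a common interval.

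First I would compute $\dot{\theta}_{i,j}(0)$ directly. Because $\theta_{i,j}(0) = \pi/4$ for every cell of the reduced system, $\theta_{i,i}(t) \equiv 0$, and $\theta_{i,0}(0) = \pi/2 - \pi/4 = \pi/4$, each neighbour-difference $\theta_{i',j'}(0) - \theta_{i,j}(0)$ vanishes unless $(i',j')$ is a diagonal cell, which happens exactly when $(i,j)$ lies on the sub-diagonal $j = i - 1$. On the sub-diagonal the two diagonal neighbours each contribute $H(0 - \pi/4) = -H(\pi/4)$, so equation $(\ref{SpecialForm1})$ gives $\dot{\theta}_{i,i-1}(0) = -2H(\pi/4) < 0$, strict by the positivity property $(\ref{Positivity})$. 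For every other cell of the reduced system all summands are $H(0) = 0$; in particular, the apparent exception in $(\ref{SpecialForm2})$ reduces to $H(\pi/2 - 2 \cdot \pi/4) = H(0) = 0$. Hence $\dot{\theta}_{i,j}(0) = 0$ off the sub-diagonal.

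Second, for the cells with $\dot{\theta}_{i,j}(0) = 0$, I would locate the first non-vanishing time derivative of $\theta_{i,j}$ at $t = 0$ by induction on the graph distance $d = i - j \geq 2$ to the diagonal. The ``finite speed of propagation'' observation from Section $\ref{sec:Model}$ (that $\theta^{(k)}_{i,j}(0)$ depends only on $\theta$-values at cells within graph distance $k$ of $(i,j)$), together with the fact that at $t = 0$ all such $\theta$-values equal $\pi/4$ except those on the diagonal, immediately gives $\theta^{(k)}_{i,j}(0) = 0$ for every $k \leq d - 1$. Applying Faà di Bruno to differentiate the ODE a further $d - 1$ times, the only surviving contribution at $t = 0$ is the ``linear chain'' that picks the factor $H'(0) > 0$ (strictly positive by monotonicity of $H$ on $(-\pi/2,\pi/2)$) at each of the $d - 1$ steps and terminates with the sub-diagonal value $-2H(\pi/4)$; every other term vanishes because it contains either an even-order derivative $H^{(2m)}(0) = 0$ (by oddness of $H$) or a product of lower-order time derivatives of $\theta$ at cells closer to the diagonal that vanish by the inductive hypothesis. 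Since every distance-$d$ cell has exactly two distance-$(d-1)$ neighbours, namely $(i-1,j)$ and $(i,j+1)$, the recursion $\theta^{(d)}_{i,j}(0) = H'(0)\bigl[\theta^{(d-1)}_{\mathrm{nbr}_1}(0) + \theta^{(d-1)}_{\mathrm{nbr}_2}(0)\bigr]$ with base case $\theta^{(1)}_{i,i-1}(0) = -2H(\pi/4)$ unrolls to $\theta^{(d)}_{i,j}(0) = -2^{d}(H'(0))^{d-1} H(\pi/4) < 0$.

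Finally, a Taylor expansion at $t = 0$ yields $\dot{\theta}_{i,j}(t) = \theta^{(d)}_{i,j}(0)\, t^{d-1}/(d-1)! + O(t^d) < 0$ on some open interval $(0, t_{i,j})$; since the reduced system is finite, setting $t_0 := \min_{1 \leq j < i \leq N} t_{i,j} > 0$ establishes the lemma. I expect the main obstacle to be the bookkeeping in the second step: in the Faà di Bruno expansion of the $(d-1)$-fold time derivative of $\sum H(\theta_{i',j'} - \theta_{i,j})$, one must verify systematically that every term outside the linear chain vanishes at $t = 0$. The interplay between the oddness of $H$ (which kills all $H^{(2m)}(0)$) and the inductive vanishing of lower-order time derivatives at cells closer to the diagonal is precisely what makes this work, and it is at this point that every hypothesis on $H$ is genuinely used.
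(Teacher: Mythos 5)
Your proposal is correct and takes essentially the same route as the paper: identify the sub-diagonal cells as the only ones with $\dot{\theta}_{i,j}(0) \neq 0$, argue by induction on the distance $d = i-j$ to the diagonal that the first non-vanishing time derivative at $t=0$ has order $d$ and is negative, and conclude via Taylor expansion together with the finiteness of the reduced system. You in fact carry out in full (including the explicit value $-2^{d}(H'(0))^{d-1}H(\pi/4)$ and the verification that the boundary term $H(\tfrac{\pi}{2}-2\theta_{i,1})$ contributes nothing at the leading order) the inductive step that the paper only asserts.
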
  

\begin{proof}
	Let us observe that with the initial conditions $\theta_{i,j}(0) = \frac{\pi}{4}$ 
	\begin{equation}
		\dot{\theta}_{i,j}(0) = \sum_{i',j'} H(0) = 0,
	\end{equation}
	for all $j \neq i-1$ because $H(0) = 0$ by the odd symmetry of $H$. In the case when $j = i -1$, from $(\ref{SpecialForm1})$ we get 
	\begin{equation}
		\dot{\theta}_{i,j}(0) = -2H\bigg(\frac{\pi}{4}\bigg) + 2H(0) < 0.
	\end{equation}
	This implies that there is an interval of $t$ values to the right of zero in which the value $\theta_{i,j}(t)$ indexed by an element directly below the diagonal of the reduced system is decreasing. 
	
	An inductive argument will show that when $j \neq i -1$ we have
	\begin{equation} \label{DerivativeCondition1}
		\left. \frac{d^k \theta_{i,j}}{d t^k}\right|_{t=0} = 0,\ \ \ \ \ k = 1,\dots ,i - j - 1
	\end{equation}   
	and
	\begin{equation} \label{DerivativeCondition2}
		\left. \frac{d^{(i - j)} \theta_{i,j}}{d t^{(i - j )}}\right|_{t = 0} < 0. 
	\end{equation} 
That is, the number of steps an element of the reduced system is from the diagonal determines which order derivative will be nonzero first. Then conditions $(\ref{DerivativeCondition1})$ and $(\ref{DerivativeCondition2})$ together imply that upon expanding each $\theta_{i,j}(t)$ as a Taylor series about $t = 0$ we get 
\begin{equation} \label{TaylorSeries}
	\theta_{i,j}(t) = \frac{\pi}{4} + \frac{a_{i,j}}{(i-j)!} t^{i - j} + \mathcal{O}( |t|^{i - j + 1}),
\end{equation} 
where $a_{i,j} < 0$ is used to denote the term $(\ref{DerivativeCondition2})$ for each $(i,j)$. Differentiating $(\ref{TaylorSeries})$ with respect to $t$ provides the Taylor series for $\dot{\theta}_{i,j}(t)$ about $t = 0$, given by
\begin{equation}
	\dot{\theta}_{i,j}(t) = \frac{a_{i,j}}{(i-j-1)!} t^{i - j - 1} + \mathcal{O}( |t|^{i - j}). 
\end{equation} 
Since $a_{i,j} < 0$, we have that $\dot{\theta}_{i,j}(t) < 0$ for sufficiently small $t > 0$ for each $1 \leq j < i \leq N$. But then since there are only finitely many elements in the reduced system, it follows that there exists a $t_0 > 0$ small enough so that $\dot{\theta}_{i,j}(t) < 0$ for all $0 < t < t_0$ and $1 \leq j < i \leq N$, giving the desired result.
\end{proof}

\begin{lemma} \label{lem:ErmentroutLemma2} 
	For all $t > 0$ and $(i,j)\in\Lambda_N$ we have $0 < \theta_{i,j}(t) < \frac{\pi}{4}$ and $\dot{\theta}_{i,j}(t) < 0$.
\end{lemma}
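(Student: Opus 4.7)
The plan is a contradiction argument via a maximum-time parameter. Let $t^*$ denote the supremum of those $t > 0$ for which $\dot\theta_{i,j}(s) < 0$ and $\theta_{i,j}(s) > 0$ hold simultaneously for every $s \in (0,t)$ and every $1 \leq j < i \leq N$. Lemma~\ref{lem:ErmentroutLemma1} ensures $t^* > 0$, and the upper bound $\theta_{i,j}(s) < \pi/4$ on $(0,t^*)$ is automatic from $\theta_{i,j}(0) = \pi/4$ together with $\dot\theta_{i,j} < 0$. Assuming for contradiction that $t^* < \infty$, continuity forces one of two failure modes at $t = t^*$: \emph{(A)} $\theta_{i_0,j_0}(t^*) = 0$ for some index, or \emph{(B)} every $\theta_k(t^*) > 0$ while $\dot\theta_{i_0,j_0}(t^*) = 0$ for some index.

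In case \emph{(A)} I evaluate $(\ref{ReducedFinitePhase})$ directly at $t^*$:
\[ \dot\theta_{i_0,j_0}(t^*) \;=\; \sum_{(i',j')} H\bigl(\theta_{i',j'}(t^*)\bigr), \]
where each neighbour value lies in $[0,\pi/4]$ after the boundary rules $\theta_{i,i}\equiv 0$ and $\theta_{i,0} = \pi/2 - \theta_{i,1}$ are substituted. By $(\ref{Positivity})$ and $H(0)=0$ every summand is nonnegative, so $\dot\theta_{i_0,j_0}(t^*) \geq 0$; the definition of $t^*$ gives $\dot\theta_{i_0,j_0}(t^*) \leq 0$, hence both vanish. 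If $j_0 = 1$ the reflected boundary neighbour contributes $H(\pi/2) > 0$, an immediate contradiction. Otherwise vanishing of the sum forces $H(\theta_{i',j'}(t^*)) = 0$ for every non-diagonal neighbour, which by strict positivity of $H$ on $(0,\pi/2]$ means $\theta_{i',j'}(t^*) = 0$; iterating along the connected triangular reduced lattice propagates $\theta(t^*) = 0$ to every cell, in particular to some cell in row $j = 1$, where the previous step produces the same contradiction.

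In case \emph{(B)} I differentiate $(\ref{ReducedFinitePhase})$ once. Using $\dot\theta_{i,i} \equiv 0$ and $\dot\theta_{i,0} = -\dot\theta_{i,1}$, the diagonal boundary contributes nothing and the $j = 0$ reflection contributes $-2H'(\cdot)\dot\theta_{i_0,1}(t^*) = 0$ whenever it is active (since $\dot\theta_{i_0,1}(t^*) = 0$ in that instance), so
\[ \ddot\theta_{i_0,j_0}(t^*) \;=\; \sum_{(i',j')} H'\bigl(\theta_{i',j'}(t^*) - \theta_{i_0,j_0}(t^*)\bigr)\, \dot\theta_{i',j'}(t^*). \]
Every phase difference sits in $(-\pi/4,\pi/4) \subset (-\pi/2,\pi/2)$ and every $\theta_k(t^*) > 0$, so each $H'$-coefficient is strictly positive; combined with $\dot\theta_k(t^*) \leq 0$ for every $k$, the right-hand side is $\leq 0$. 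On the other hand $\dot\theta_{i_0,j_0}$ approaches $0$ from strictly below at $t^*$, so $\ddot\theta_{i_0,j_0}(t^*) \geq 0$. The sum therefore vanishes and $\dot\theta_{i',j'}(t^*) = 0$ for every neighbour; iterating across the reduced system yields $\dot\theta_k(t^*) = 0$ for every cell, so $\theta(t^*)$ is an equilibrium of the smooth system $(\ref{ReducedFinitePhase})$. Picard--Lindel\"of uniqueness then forces $\theta(t) \equiv \theta(t^*)$ for all $t \geq 0$, which would make the initial state $\theta_{i,j}(0) \equiv \pi/4$ an equilibrium, contradicting $(\ref{SpecialForm1})$ at $t = 0$: $\dot\theta_{i,i-1}(0) = -2H(\pi/4) < 0$.

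The main technical obstacle is the propagation step common to both cases: one must certify that every $H'$- (respectively $H$-) coefficient is strictly positive on the relevant range --- hence both the restriction to the open interval $(-\pi/4,\pi/4)$ and, in case \emph{(B)}, the assumption $\theta_k(t^*) > 0$ are essential --- and one must verify that the two qualitatively different boundary rules never inject a positive contribution that breaks the cascade. Separating the diagonal boundary (frozen at zero, contributing nothing to either $\dot\theta$ or $\ddot\theta$ at the critical cell) from the $j = 0$ reflection boundary (whose time derivative is slaved to that of the $j = 1$ row and hence vanishes automatically on the critical set) is the delicate bookkeeping that makes both propagations close up cleanly.
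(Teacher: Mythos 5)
Your proposal is correct, and its overall architecture (a first-failure-time contradiction split into the two cases ``some $\theta$ hits $0$'' and ``some $\dot\theta$ hits $0$'') matches the paper's; Case~A in particular is essentially the paper's Case~1, propagating zeros through the connected reduced lattice until the reflected boundary term $H(\frac{\pi}{2})>0$ forces a contradiction in row $j=1$. Where you genuinely diverge is Case~B. The paper rules out ``all derivatives vanish at $\hat t$'' up front (as reaching an equilibrium in finite time), selects a nearest-neighbour pair with one derivative zero and one strictly negative, deduces the strict inequality $\ddot\theta_{\hat i,\hat j}(\hat t)<0$, and then Taylor-expands $\dot\theta_{\hat i,\hat j}$ to the \emph{left} of $\hat t$ to produce, via the Intermediate Value Theorem, an earlier vanishing of the derivative that contradicts minimality. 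You instead squeeze the second derivative from both sides --- $\ddot\theta_{i_0,j_0}(t^*)\geq 0$ because $\dot\theta_{i_0,j_0}$ reaches $0$ from below, and $\ddot\theta_{i_0,j_0}(t^*)\leq 0$ from the sum formula with strictly positive $H'$ weights --- so that the sum must vanish termwise, and you then run the same propagation as in Case~A to force every derivative to vanish, finishing with backward uniqueness (Picard--Lindel\"of) against the non-equilibrium initial data. Both arguments ultimately rest on the same fact (an equilibrium cannot be reached in finite time), but yours makes that fact do all the work explicitly and unifies the two cases under a single propagation mechanism, at the cost of having to verify that the diagonal and $j=0$ boundary rules never inject a positive term into $\ddot\theta$ at a critical cell --- a point you correctly dispose of by noting that the reflection term $-2H'(\frac{\pi}{2}-2\theta_{i_0,1})\dot\theta_{i_0,1}(t^*)$ vanishes precisely because $\dot\theta_{i_0,1}(t^*)=0$ whenever that cell is under examination. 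The paper's route avoids the second propagation but requires the extra Taylor/IVT step; yours is arguably the more self-contained of the two.
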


\begin{proof}
	Assume the contrary. That is, let $\hat{t} > 0$ be the first place where either $\theta_{\hat{i},\hat{j}}(\hat{t}) = 0$ or $\dot{\theta}_{\hat{i},\hat{j}}(\hat{t}) = 0$ for some index $(\hat{i},\hat{j})$. We will break this proof up into two case: (1) to show that there cannot be an index which satisfies $\theta_{\hat{i},\hat{j}}(\hat{t}) = 0$ and (2) that there cannot be an index such that $\dot{\theta}_{\hat{i},\hat{j}}(\hat{t}) = 0$ for all $t > 0$ and finite.

	{\em \underline{Case 1:}} Working with the first case we assume that $\theta_{\hat{i},\hat{j}}(\hat{t}) = 0$ for some $(\hat{i},\hat{j})$. Then we note that from the minimality of $\hat{t}$ we necessarily have $0 \leq \theta_{i,j}(\hat{t}) < \frac{\pi}{4}$ and $\dot{\theta}_{i,j}(\hat{t}) \leq 0$ for all $i,j$. Using the special form $(\ref{SpecialForm2})$ of those elements with $j = 1$ we see that $\hat{j} \neq 1$ since
	\begin{equation}
		\dot{\theta}_{\hat{i},1}(\hat{t}) = H\bigg(\frac{\pi}{2}\bigg) + H(\theta_{\hat{i},2}(\hat{t})) + H(\theta_{\hat{i}-1,1}(\hat{t})) + \left\{
     				\begin{array}{lr}
       				H(\theta_{\hat{i}+1,1}(\hat{t})) & : \hat{i} \neq N\\
       				0 & :  \hat{i} = N
     				\end{array}
  			 \right. ,
	\end{equation}  	
	which following $(\ref{Positivity})$ gives that each term in the sum is nonnegative, with $H(\frac{\pi}{2}) > 0$, thus giving that $\dot{\theta}_{\hat{i},1}(\hat{t})>0$, contradicting our assumption. 
	
	Moving to a pair $(\hat{i},\hat{j})$ with $\hat{j} \neq 1$ we see that if $\theta_{\hat{i},\hat{j}}(\hat{t}) = 0$ we get 
	\begin{equation}
		\dot{\theta}_{\hat{i},\hat{j}}(\hat{t}) = \sum_{i',j'}H(\theta_{\hat{i}',\hat{j}'}(\hat{t})).
	\end{equation}
	Since all elements of the reduced system are nonnegative at $t = \hat{t}$ we have that $\dot{\theta}_{\hat{i},\hat{j}}(\hat{t}) \geq 0$, again from $(\ref{Positivity})$. But from the definition of $\hat{t}$ we know that $\dot{\theta}_{\hat{i},\hat{j}}(\hat{t}) \leq 0$, implying that $\dot{\theta}_{\hat{i},\hat{j}}(\hat{t}) = 0$. The only way in which this is possible is if elements indexed by nearest-neighbours of $(\hat{i},\hat{j})$ are such that $\theta_{\hat{i}',\hat{j}'}(\hat{t}) = 0$ as well. This allows us to move to the nearest-neighbour indexed by $(\hat{i},\hat{j}-1)$ and perform the same analysis to similarly find that $\dot{\theta}_{\hat{i},\hat{j}-1}(\hat{t}) \geq 0$. Again, the only way in which this is possible is if all elements indexed by nearest-neighbours of $(\hat{i},\hat{j}-1)$ take the value $0$ at $t = \hat{t}$. This process continues by systematically moving down one cell at a time through the lattice until we find that $\theta_{\hat{i},2}(\hat{t}) = 0$. But then 
 	\begin{equation}
		\dot{\theta}_{\hat{i},2}(\hat{t}) =H(\theta_{\hat{i},1}(\hat{t})) + \cdots,
	\end{equation} 
where the neglected terms in the ellipsis are those elements to the left, above and to the right (if $\hat{i}\neq N$). As before, the neglected terms return a nonnegative value but one notices that since we have already shown that $\theta_{\hat{i},1}(\hat{t}) > 0$, it follows from $(\ref{Positivity})$ that $H(\theta_{\hat{i},1}(\hat{t})) > 0$. This then gives that $\dot{\theta}_{\hat{i},2}(\hat{t}) > 0$, which is a contradiction. Therefore, no $\theta_{i,j}(t)$ can reach $0$ at $t = \hat{t}$. 

{\em \underline{Case 2:}} Turning to the second case, we suppose $\dot{\theta}_{\hat{i},\hat{j}}(\hat{t}) = 0$. Clearly we cannot have $\dot{\theta}_{i,j}(\hat{t}) = 0$ for all $i,j$ since this would mean that we have reached an equilibrium point in finite $t$, which is impossible. Therefore, it can be assumed that there exists some index $(i_0,j_0)$ such that $\dot{\theta}_{i_0,j_0}(\hat{t}) < 0$ and again $0 \leq \theta_{i,j}(\hat{t}) < \frac{\pi}{4}$ and $\dot{\theta}_{i,j}(\hat{t}) \leq 0$ for all $i,j$. 

Let $\theta_{i_0,j_0}$ be the closest indexed element to $\theta_{\hat{i},\hat{j}}$ with $\dot{\theta}_{i_0,j_0}(\hat{t}) < 0$. Without loss of generality, we may assume that $(\hat{i},\hat{j})$ and $(i_0,j_0)$ are nearest-neighbours. Indeed, as previously noted, at $t = \hat{t}$ not all elements of the reduced system can have a derivative that vanishes, therefore there must be a pair of nearest-neighbours such that one of their derivatives vanishes at $t = \hat{t}$ and the other does not. If we can show that this cannot be possible, then necessarily we have that every element of the reduced system $\theta_{i,j}(t)$ is such that either $\dot{\theta}_{i,j}(\hat{t}) < 0$ or $\dot{\theta}_{i,j}(\hat{t}) = 0$. Since the later is impossible, we must have the former, completing the proof.   

Now, we proceed under the assumption that $(\hat{i},\hat{j})$ and $(i_0,j_0)$ are nearest-neighbours. The assumption $\dot{\theta}_{\hat{i},\hat{j}}(\hat{t}) = 0$ implies that
\begin{equation}
	\ddot{\theta}_{\hat{i},\hat{j}}(\hat{t}) = \sum_{\hat{i}',\hat{j}'} H'(\theta_{\hat{i}',\hat{j}'}(\hat{t}) - \theta_{\hat{i},\hat{j}}(\hat{t}))\dot{\theta}_{\hat{i}',\hat{j}'}(\hat{t}),
\end{equation}
where $H'$ denotes the derivative of $H$ with respect to its argument. By assumption, at $t = \hat{t}$ we have $0 \leq \theta_{i,j}(\hat{t}) < \frac{\pi}{4}$ for all $1 \leq j < i \leq N$, so that
\begin{equation}
	|\theta_{\hat{i}',\hat{j}'}(\hat{t}) - \theta_{\hat{i},\hat{j}}(\hat{t})| < \frac{\pi}{4}.
\end{equation}   
Since $H$ is assumed to be strictly increasing on $(\frac{-\pi}{2},\frac{\pi}{2})$, it follows that 
\begin{equation}
	H'(\theta_{\hat{i}',\hat{j}'}(\hat{t}) - \theta_{\hat{i},\hat{j}}(\hat{t})) > 0.	
\end{equation}
Furthermore, every element of the reduced system satisfies $\dot{\theta}_{i,j}(\hat{t}) \leq 0$, with the additional assumption that $\dot{\theta}_{i_0,j_0}(\hat{t}) < 0$ giving 
\begin{equation}
	\ddot{\theta}_{\hat{i},\hat{j}}(\hat{t}) = \sum_{\hat{i}',\hat{j}'} H'(\theta_{\hat{i}',\hat{j}'}(\hat{t}) - \theta_{\hat{i},\hat{j}}(\hat{t}))\dot{\theta}_{\hat{i}',\hat{j}'}(\hat{t}) < 0,	
\end{equation}  
since $(i_0,j_0)$ and $(\hat{i},\hat{j})$ are nearest-neighbours. 

Now, expanding $\dot{\theta}_{\hat{i},\hat{j}}(t)$ as a Taylor series about $t = \hat{t}$ gives 
\begin{equation}
	\dot{\theta}_{\hat{i},\hat{j}}(t) = \ddot{\theta}_{\hat{i},\hat{j}}(\hat{t})(t - \hat{t}) + \mathcal{O}(|t - \hat{t}|^2),	
\end{equation}
since $\dot{\theta}_{\hat{i},\hat{j}}(\hat{t}) = 0$. Since $\ddot{\theta}_{\hat{i},\hat{j}}(\hat{t}) < 0$, we have that there exists a sufficiently small nontrivial interval of $t$ values to the left of $\hat{t}$ such that $\dot{\theta}_{\hat{i},\hat{j}}(t) > 0$. But this implies that there exists a positive $t' < \hat{t}$ such that $\dot{\theta}_{\hat{i},\hat{j}}(t') = 0$ since Lemma $\ref{lem:ErmentroutLemma1}$ gave that $\dot{\theta}_{\hat{i},\hat{j}}(t) < 0$ for sufficiently small $t > 0$. This therefore contradicts the minimality of $\hat{t}$ and hence from our arguments above, no derivative of the $\theta_{i,j}(t)$ can vanish at a finite value of $t$, completing the proof.
\end{proof}

In summary, it was shown that each element of the reduced system is such that
\begin{equation}
	0 < \theta_{i,j}(t) < \frac{\pi}{4}\ \ \ \ \  {\rm and}\ \ \ \ \  \dot{\theta}_{i,j}(t) < 0	
\end{equation}  
for all $t > 0$ and $(i,j)\in\Lambda_N$. Hence, $\theta_{i,j}(t)$ is a decreasing function which is bounded below, so
\begin{equation}
	\bar{\theta}_{i,j} = \lim_{t \to \infty} \theta_{i,j}(t)
\end{equation}
exists and lies in the interval $[0,\frac{\pi}{4})$. Therefore $\{\bar{\theta}_{i,j}\}_{1\leq j < i \leq N}$ gives an equilibrium for the reduced system. As a brief aside, one can further show via the same methods used in Lemma $\ref{lem:ErmentroutLemma2}$ that $\bar{\theta}_{i,j} > 0$ for all $1\leq j < i \leq N$.

\begin{figure} 
	\centering
	\includegraphics[width = 7cm]{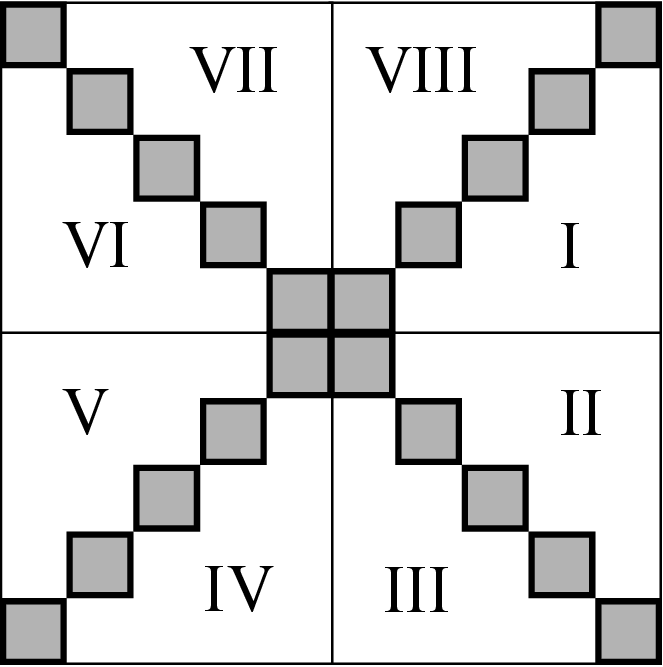}
	\caption{The eight distinct regions of the finite lattice defined by the reduced system.}
	\label{fig:ErmentroutDiagram}
\end{figure}  

To extend the solution of the reduced system to the entire square lattice we refer to Figure $\ref{fig:ErmentroutDiagram}$ where the finite $2N\times 2N$ lattice has been partitioned into eight distinct regions. To begin, it has already been remarked that those elements along the diagonal between regions $I$ and $VIII$ are fixed at $0$. Then those on the diagonal between regions $II$ and $III$ are fixed at $\frac{\pi}{2}$, those between regions $IV$ and $V$ are fixed at $\pi$ and those between regions $VI$ and $VII$ are fixed at $\frac{3\pi}{2}$. If we write $\bar{\theta}$ to be the solution of the reduced system found above, the solutions in each of the regions of Figure $\ref{fig:ErmentroutDiagram}$ are as follows:
\begin{equation} \label{SymmetryExtensions}
	\begin{split}
	\begin{aligned}
		I:&\ \bar{\theta} \to \bar{\theta} \\
		II:&\ \bar{\theta} \to \frac{\pi}{2} - \bar{\theta} \\
		III:&\ \bar{\theta} \to \frac{\pi}{2} + \bar{\theta} \\
		IV:&\ \bar{\theta} \to \pi - \bar{\theta} \\
		V:&\ \bar{\theta} \to \pi + \bar{\theta} \\
		VI:&\ \bar{\theta} \to \frac{3\pi}{2} - \bar{\theta} \\
		VII:&\ \bar{\theta} \to \frac{3\pi}{2} + \bar{\theta} \\
		VIII:&\ \bar{\theta} \to - \bar{\theta}.
	\end{aligned}
	\end{split}
\end{equation} 
Note that these extensions give exactly the symmetry of the solution shown in Figure $\ref{fig:PhaseSolution}$, and satisfy the condition
\begin{equation}
	R(\bar{\theta}_{i,j}) = \bar{\theta}_{i,j} + \frac{\pi}{2}
\end{equation}
giving a rotating wave solution to (\ref{FinitePhase}).

Finally, we extend the previous results slightly with the following lemma, which will be crucial for our extension to the infinite lattice in the following section. 

\begin{lemma} \label{lem:Horizontal} 
	Let $N \geq 2$ and finite. If we denote $\bar{\theta}_{i,j}$ as the solutions on the finite $2N\times 2N$ lattice in the reduced system $(\ref{ReducedFinitePhase})$, then $\bar{\theta}_{i+1,j} \geq \bar{\theta}_{i,j}$ for all $1 \leq j \leq i \leq N-1$.   	
\end{lemma}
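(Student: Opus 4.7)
The plan is to upgrade the lemma to a time-dependent statement on the reduced system $(\ref{ReducedFinitePhase})$ and then pass to the limit. Set $D_{i,j}(t) := \theta_{i+1,j}(t) - \theta_{i,j}(t)$ for $1 \leq j \leq i \leq N-1$; it suffices to show $D_{i,j}(t) \geq 0$ for all $t \geq 0$, from which $\bar\theta_{i+1,j} \geq \bar\theta_{i,j}$ follows by $t \to \infty$. The diagonal case $j = i$ is immediate from the boundary condition $\theta_{i,i}(t) = 0$ and the strict positivity $\theta_{i+1,i}(t) > 0$ for $t > 0$ given by Lemma \ref{lem:ErmentroutLemma2}. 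For the subdiagonal case $1 \leq j < i \leq N-1$, the Taylor expansion in Lemma \ref{lem:ErmentroutLemma1} gives
\begin{equation*}
D_{i,j}(t) = -\frac{a_{i,j}}{(i-j)!}\, t^{i-j} + O\!\left(t^{i-j+1}\right) > 0
\end{equation*}
on a right-neighbourhood of $t = 0$, since $a_{i,j} < 0$ and the leading term of $\theta_{i+1,j}$ appears at the higher order $t^{i+1-j}$.

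To propagate this nonnegativity to all $t > 0$, I plan to invoke the flow invariance of the closed cone $\{D \geq 0\}$ under the reduced dynamics. By Nagumo's subtangential criterion, it suffices to establish the \emph{viability condition}: at every time $t$, whenever $D_{\hat i,\hat j}(t) = 0$ for some valid pair $(\hat i,\hat j)$ and $D_{i,j}(t) \geq 0$ for all valid pairs, one has $\dot D_{\hat i,\hat j}(t) \geq 0$. Setting $\psi := \theta_{\hat i,\hat j}(t) = \theta_{\hat i+1,\hat j}(t)$, the two mutual coupling contributions both vanish at $H(0)$, and the remaining ``other neighbour'' contributions split into three natural pairs by geometric position:
\begin{equation*}
(\hat i+2,\hat j) \leftrightarrow (\hat i-1,\hat j), \quad (\hat i+1,\hat j+1) \leftrightarrow (\hat i,\hat j+1), \quad (\hat i+1,\hat j-1) \leftrightarrow (\hat i,\hat j-1).
\end{equation*}
Each pair gives a difference $H(\theta_a - \psi) - H(\theta_b - \psi)$ with $\theta_a \geq \theta_b$ by the running hypothesis applied to the corresponding column-pair; strict monotonicity of $H$ on $(-\frac{\pi}{2},\frac{\pi}{2})$, combined with the confinement $|\theta_{i',j'} - \theta_{i,j}| < \frac{\pi}{4}$ from Lemma \ref{lem:ErmentroutLemma2}, makes each such difference nonnegative. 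Summing yields $\dot D_{\hat i,\hat j}(t) \geq 0$, verifying the viability condition and hence the invariance.

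The main obstacle is the case analysis needed to handle boundary configurations, where the geometric pairing scheme must be adjusted. When $\hat j = 1$, the $(\cdot, 0)$ neighbours are replaced by the special term $H(\frac{\pi}{2} - 2\theta_{\cdot,1})$, whose two copies in $\dot\theta_{\hat i,1}$ and $\dot\theta_{\hat i+1,1}$ cancel in $\dot D$ since $\theta_{\hat i,1} = \theta_{\hat i+1,1} = \psi$. When $\hat i = N-1$ the $(\hat i+2,\hat j)$ neighbour is absent, leaving $-H(\theta_{\hat i-1,\hat j} - \psi)$ unpaired; oddness of $H$ and $\theta_{\hat i-1,\hat j} \leq \psi$ (by $D_{\hat i-1,\hat j} \geq 0$) make this term nonnegative. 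When $\hat j = \hat i - 1$ the two diagonal-boundary neighbours $(\hat i-1,\hat i-1)$ and $(\hat i,\hat i)$ of $(\hat i,\hat i-1)$ contribute $-H(\psi)$ each to $\dot\theta_{\hat i,\hat i-1}$, which re-pair with the corresponding cells of $(\hat i+1,\hat i-1)$ --- in particular with the reduced-system cell $(\hat i+1,\hat i)$ --- via identities of the form $H(\theta_a - \psi) + H(\psi)$, each nonnegative because $\theta_a \geq 0$ and $H$ is monotone. In every boundary case the viability inequality $\dot D_{\hat i,\hat j}(t) \geq 0$ is preserved, so the invariance conclusion holds and the lemma follows.
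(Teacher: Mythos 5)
Your argument matches the paper's in its two essential computations: the Taylor expansion at $t=0$ using the derivative conditions of Lemma \ref{lem:ErmentroutLemma1} to get $\theta_{i+1,j}(t)>\theta_{i,j}(t)$ on a right-neighbourhood of zero, and the pairing of the ``other'' neighbour couplings of $(\hat i,\hat j)$ and $(\hat i+1,\hat j)$ at a touching time, with the mutual couplings cancelling at $H(0)$ and each remaining pair controlled by monotonicity of $H$ together with the running order hypothesis and the confinement from Lemma \ref{lem:ErmentroutLemma2}. Where you diverge is in how the propagation step is closed. The paper does not settle for the weak inequality $\dot D_{\hat i,\hat j}\geq 0$: it observes that along every row the leftmost comparison is strict at any time (since $\theta_{i_0+1,i_0}(t)>0=\theta_{i_0,i_0}(t)$), so the first-touching index can be chosen with $\theta_{i_0,j_0}(t_0)>\theta_{i_0-1,j_0}(t_0)$ strictly; the unmatched (or dominating) term $-H(\theta_{i_0-1,j_0}(t_0)-\theta_{i_0,j_0}(t_0))$ is then strictly positive, giving $\dot D_{i_0,j_0}(t_0)>0$, and a backward Taylor expansion about $t_0$ forces $D_{i_0,j_0}<0$ just before $t_0$, contradicting minimality of $t_0$ via the Intermediate Value Theorem. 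This strictness is exactly what lets the paper avoid any appeal to viability theory.

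Your Nagumo route can be made to work, but as stated it has a gap you should close. Forward invariance of the closed polyhedron $\{D\geq 0\}$ requires the subtangential condition at \emph{every} boundary point of the invariant set, whereas your verification of $\dot D_{\hat i,\hat j}\geq 0$ uses the bounds $0<\theta_{i,j}<\frac{\pi}{4}$ (needed so that all arguments of $H$ and $H'$ stay inside $\bigl(-\frac{\pi}{2},\frac{\pi}{2}\bigr)$ where $H$ is increasing). Those bounds are properties of the particular trajectory furnished by Lemma \ref{lem:ErmentroutLemma2}, not constraints defining your cone, and at a general point of $\{D\geq 0\}$ the monotonicity of $H$ is unavailable. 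You must either take the invariant set to be the intersection of $\{D\geq 0\}$ with the closed box $\{0\leq\theta_{i,j}\leq\frac{\pi}{4}\}$ and also verify subtangentiality where the box constraints are active, or restrict the state space to the open box (which the trajectory never leaves for $t>0$, by Lemma \ref{lem:ErmentroutLemma2}) and apply the relative form of Nagumo's theorem there. Note also that with only the weak inequality $\dot D\geq 0$ at the touching time, the elementary first-crossing contradiction does not close on its own --- which is precisely why the paper arranges for strictness. The boundary bookkeeping in your last paragraph (cancellation of the mirrored $j=0$ terms, the missing right neighbour at $\hat i=N-1$, and the re-pairing against the zero diagonal) is consistent with the paper's computation.
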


\begin{proof}
	This proof follows in a very similar way to how we proceeded in Lemmas $\ref{lem:ErmentroutLemma1}$ and $\ref{lem:ErmentroutLemma2}$ to show that the $\theta_{i,j}(t)$ of the reduced system are decreasing for all $t > 0$. We begin by using the conditions of the derivatives $(\ref{DerivativeCondition1})$ and $(\ref{DerivativeCondition2})$ to find that there exists a small interval to the right of zero for which $\theta_{i,j}(t) < \theta_{i+1,j}(t)$ for all $t > 0$ belonging to this interval. Then we assume that this interval is finite to arrive at a contradiction showing that these inequalities hold for all $t > 0$. Upon showing that these inequalities hold for all $t > 0$, we may extend them to the steady-state solution by having $t \to \infty$, giving the desired result.  

	Recall that we take our initial conditions to be $\theta_{i,j}(0) = \frac{\pi}{4}$ for all $(i,j)$ in the reduced system and those on the diagonal fixed at $0$. From Lemma $\ref{lem:ErmentroutLemma1}$ we have $\dot{\theta}_{i,i-1}(0) < 0$ and 
	\begin{equation}
		\left. \frac{d^k \theta_{i,j}}{d t^k}\right|_{t=0} = 0,\ \ \ \ \ k = 1,\dots ,i - j - 1
	\end{equation}   
	with
	\begin{equation}
		\left. \frac{d^{(i - j)} \theta_{i,j}}{d t^{(i - j )}}\right|_{t = 0} < 0,
	\end{equation} 
	for each $j \neq i -1$. Then from these facts we have that upon expanding the difference $\theta_{i,j}(t) - \theta_{i+1,j}(t)$ as a Taylor series about $t = 0$ the first $(i - j - 1)$ terms vanish leaving
	\begin{equation}
		\theta_{i,j}(t) - \theta_{i+1,j}(t) = \frac{b_{i,j}}{(i-j)!} t^{(i - j)} + \mathcal{O}(|t|^{i-j+1}), 	
	\end{equation}
	where
	\begin{equation}
		b_{i,j} = \left. \frac{d^{(i - j)} \theta_{i,j}}{d t^{(i - j )}}\right|_{t = 0} - \underbrace{\left. \frac{d^{(i - j)} \theta_{i+1,j}}{d t^{(i - j )}}\right|_{t = 0}}_{= 0} = \left. \frac{d^{(i - j)} \theta_{i,j}}{d t^{(i - j )}}\right|_{t = 0} < 0.
	\end{equation}
	Therefore, there exists small $t > 0$ such that $\theta_{i,j}(t) - \theta_{i+1,j}(t) < 0$, thus implying that $\theta_{i,j}(t) < \theta_{i+1,j}(t)$ on this interval. Since this is true for all elements of the reduced system, which is finite, there exists a $t_0 > 0$ such that $\theta_{i,j}(t) < \theta_{i+1,j}(t)$ for all $t \in (0, t_0)$ and $1 \leq j < i \leq N - 1$. 
	
	We now assume that this ordering of the elements of the reduced system only persists for finite $t$. That is, let $t_0 > 0$ be the first value of $t$ in which the inequality no longer holds for all elements of the reduced system. Then there exists at least one index of the reduced system, $(i_0,j_0)$, such that $\theta_{i_0+1,j_0}(t_0) = \theta_{i_0,j_0}(t_0)$, and $\theta_{i+1,j}(t_0) \geq \theta_{i,j}(t_0)$ for all $(i,j) \neq (i_0,j_0)$. We first note that $j_0 \neq i_0$. Indeed, by Lemma $\ref{lem:ErmentroutLemma2}$ the elements of the reduced system satisfy $0 < \theta_{i,j}(t) < \frac{\pi}{4}$ for all $t > 0$ and one sees that 
	\begin{equation}
		\theta_{i_0+1,i_0}(t) > 0 = \theta_{i_0,i_0}(t).
	\end{equation}
	Hence, along every row of the reduced system, there must be at least one strict inequality at $t = t_0$. Therefore, without loss of generality, we may assume that the index $(i_0,j_0)$ is such that $\theta_{i_0+1,j_0}(t_0) = \theta_{i_0,j_0}(t_0) > \theta_{i_0-1,j_0}(t_0)$. 
	
	\begin{figure} 
		\centering
		\includegraphics[width = 6cm]{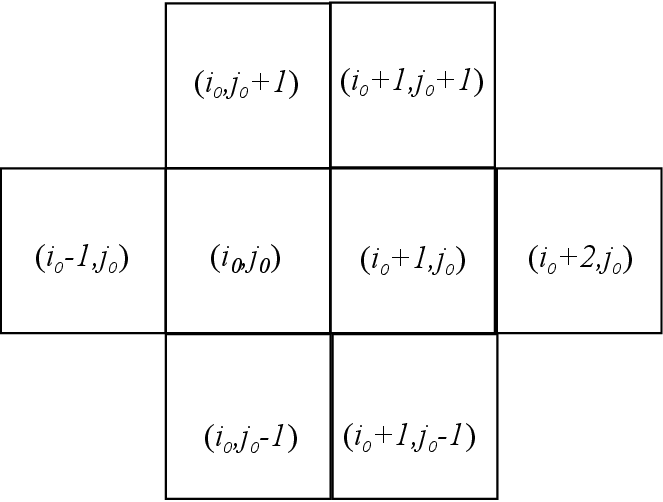}
		\caption{The lattice points indexed by $(i_0,j_0)$ and $(i_0+1,j_0)$ along with their nearest-neighbours.}
		\label{fig:NNHorizontal}
	\end{figure} 
	
	Now let us investigate $\dot{\theta}_{i_0+1,j_0}(t_0) - \dot{\theta}_{i_0,j_0}(t_0)$. Using the form of the differential equations given in $(\ref{FinitePhase})$ we see that 
	\begin{equation}
		\begin{aligned}
		\begin{split}
			\dot{\theta}_{i_0+1,j_0}&(t_0) - \dot{\theta}_{i_0,j_0}(t_0) = H(\theta_{i_0+1,j_0 + 1}(t_0) - \theta_{i_0,j_0}(t_0)) - H(\theta_{i_0,j_0 + 1}(t_0) - \theta_{i_0,j_0}(t_0))\\
			 &+ H(\theta_{i_0+1,j_0-1}(t_0) - \theta_{i_0,j_0}(t_0)) - H(\theta_{i_0,j_0 -1}(t_0) - \theta_{i_0,j_0}(t_0))\\
			 &+ \left\{
     				\begin{array}{lr}
       				H(\theta_{i_0+2,j_0}(t_0) - \theta_{i_0,j_0}(t_0)) - H(\theta_{i_0-1,j_0 }(t_0) - \theta_{i_0,j_0}(t_0)) & : i_0 \neq N-1\\
       				- H(\theta_{i_0-1,j_0 +1}(t_0) - \theta_{i_0,j_0}(t_0)) & :  i_0 = N-1
     				\end{array}
  			 \right.   	
		\end{split}
		\end{aligned}
	\end{equation} 
where $ \theta_{i_0+1,j_0}(t_0)$ has been replaced with $\theta_{i_0,j_0}(t_0)$ by our assumption. Furthermore, notice that the coupling between these cells does not appear in the difference since
\begin{equation}
	 H(\theta_{i_0+1,j_0}(t_0) - \theta_{i_0,j_0}(t_0)) = H(0) = 0,
\end{equation}
because $\theta_{i_0+1,j_0}(t_0) = \theta_{i_0,j_0}(t_0)$. Figure $\ref{fig:NNHorizontal}$ shows the location of these elements in relation to each other on the lattice for visual reference. Since $\theta_{i_0+1,j_0}(t_0) = \theta_{i_0,j_0}(t_0) > \theta_{i_0-1,j_0}(t_0)$ we see that upon using the odd symmetry of $H$ we have
	\begin{equation}
		 -H(\theta_{i_0-1,j_0}(t_0) - \theta_{i_0,j_0}(t_0)) = H(\theta_{i_0,j_0}(t_0) - \theta_{i_0-1,j_0}(t_0)) > 0.
	\end{equation}
	Moreover, in the case when $i_0 \neq N -1$ our assumption gives $\theta_{i_0+2,j_0}(t_0) \geq \theta_{i_0+1,j_0}(t_0) = \theta_{i_0,j_0}(t_0)$ which implies
	\begin{equation}
		H(\theta_{i_0+2,j_0}(t_0) - \theta_{i_0,j_0}(t_0)) \geq 0.	
	\end{equation}
	Again by the minimality of $t_0$ we get that $\theta_{i_0+1,j_0\pm 1}(t_0) \geq \theta_{i_0,j_0\pm 1}$, leading to the fact that 
	\begin{equation}
		H(\theta_{i_0+1,j_0\pm 1}(t_0) - \theta_{i_0,j_0}(t_0)) - H(\theta_{i_0,j_0\pm 1}(t_0) - \theta_{i_0,j_0}(t_0)) \geq 0.	
	\end{equation}
	Putting this all together reveals that 
	\begin{equation}
		\dot{\theta}_{i_0+1,j_0}(t_0) - \dot{\theta}_{i_0,j_0}(t_0) > 0.
	\end{equation} 
	Expanding $\theta_{i_0+1,j_0}(t) - \theta_{i_0,j_0}(t)$ as a Taylor series about $t = t_0$ gives
	\begin{equation}	
		\begin{aligned}
		\begin{split}
		\theta_{i_0+1,j_0}(t) - \theta_{i_0,j_0}(t) = & \\ 
		\underbrace{[\theta_{i_0+1,j_0}(t_0) - \theta_{i_0,j_0}(t_0)]}_{= 0} + &\underbrace{[\dot{\theta}_{i_0+1,j_0}(t_0) - \dot{\theta}_{i_0,j_0}(t_0)]}_{> 0}(t - t_0) + \mathcal{O}(|t - t_0|^2).  
		\end{split}
		\end{aligned}
	\end{equation}
	Thus, there exists an $\varepsilon > 0$ such that $\theta_{i_0+1,j_0}(t) - \theta_{i_0,j_0}(t) < 0$ on $(t_0 - \varepsilon, t_0)$. But it was already shown that for $t > 0$ sufficiently small we have $\theta_{i_0+1,j_0}(t) - \theta_{i_0,j_0}(t) > 0$, which from the Intermediate Value Theorem implies that there is some positive $t' < t_0$ such that $\theta_{i_0+1,j_0}(t') = \theta_{i_0,j_0}(t')$. This contradicts the minimality of $t_0$, thus giving that no such $t_0$ can exist.   
	
	Therefore, $\theta_{i+1,j}(t) > \theta_{i,j}(t)$ for all $t > 0$. Allowing $t \to \infty$ we see that the elements of the equilibrium must satisfy $\bar{\theta}_{i+1,j} \geq \bar{\theta}_{i,j}$ for all $1 \leq j \leq i \leq N-1$, giving the desired result. 
\end{proof}

\section{Rotating Waves on an Infinite Lattice} \label{sec:InfiniteLattice} 

We now demonstrate how the solutions on the finite lattice of the previous section can be extended to solutions to $(\ref{HZeros})$. Throughout this section when referring to the solution on a finite lattice, it should be understood as the specific solution found in the previous section, and illustrated in Figure $\ref{fig:PhaseSolution}$. We provide the following result.

\begin{thm} \label{thm:InfinitePhase} 
	The system of equations $(\ref{HZeros})$ exhibits a rotating wave solution.   
\end{thm}

The proof of Theorem $\ref{thm:InfinitePhase}$ is broken down into a series of lemmas for the ease of the reader. The following lemma is the main result used to prove Theorem~\ref{thm:InfinitePhase}. 
	
\begin{lemma} \label{lem:IncreasingWithLattice} 
	Let us denote $\bar{\theta}^{(N)}_{i,j}$ as the solutions of the finite $2N\times 2N$ lattice in the reduced system for any $N \geq 2$. Then $\bar{\theta}^{(N)}_{i,j} \leq \bar{\theta}^{(N+1)}_{i,j}$ for all $(i,j) \in \Lambda_N$. That is, the value of the equilibrium point at each index in the reduced system is increasing as a function of the size of the lattice.     
\end{lemma}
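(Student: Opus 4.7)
The plan is to prove the stronger time-dependent inequality $\theta^{(N)}_{i,j}(t) \leq \theta^{(N+1)}_{i,j}(t)$ for every $t > 0$ and every $1 \leq j < i \leq N$, starting both time evolutions from the common initial condition $\theta_{i,j}(0) = \frac{\pi}{4}$, and then pass to the limit $t \to \infty$. The key structural observation is that the ODEs for $\theta^{(N+1)}_{i,j}$ and $\theta^{(N)}_{i,j}$ coincide exactly on indices with $i < N$, while at $i = N$ the $(N+1)$-reduced system carries a single extra right-neighbour coupling $H(\theta^{(N+1)}_{N+1,j} - \theta^{(N+1)}_{N,j})$. Applying Lemma~\ref{lem:Horizontal} inside the $(N+1) \times (N+1)$ lattice gives $\theta^{(N+1)}_{N+1,j}(t) > \theta^{(N+1)}_{N,j}(t)$ strictly for every $t > 0$, so this extra coupling acts as a strictly positive forcing on the boundary row of the $N$-reduced system, which through the cooperative structure of the coupling ought to push the entire $(N+1)$-solution above the $N$-solution.

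\textbf{Main argument.} Set $u_{i,j}(t) := \theta^{(N+1)}_{i,j}(t) - \theta^{(N)}_{i,j}(t)$. At $t = 0$ one has $u_{i,j}(0) = 0$, and because the extra coupling evaluates to $H(0) = 0$ there, repeating the Taylor analysis of Lemma~\ref{lem:ErmentroutLemma1}, now applied to the difference system seeded by the asymmetry at row $i = N$, shows that the first non-vanishing derivative of $u_{i,j}$ at $t = 0$ is strictly positive, with its order controlled by the graph distance $N - i$. Hence $u_{i,j}(t) > 0$ on some small interval $(0, t_0)$. To extend this to all $t > 0$, I follow the contradiction scheme of Lemma~\ref{lem:Horizontal}: let $\tilde t > 0$ be the first time at which the inequality fails, so $u_{i_0,j_0}(\tilde t) = 0$ for some index while $u_{i,j}(\tilde t) \geq 0$ elsewhere. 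Writing
\begin{equation*}
\dot u_{i_0,j_0}(\tilde t) = \sum_{(i',j')} \bigl[ H(\theta^{(N+1)}_{i',j'}(\tilde t) - c) - H(\theta^{(N)}_{i',j'}(\tilde t) - c) \bigr] + (\text{extra term if } i_0 = N),
\end{equation*}
with $c := \theta^{(N+1)}_{i_0,j_0}(\tilde t) = \theta^{(N)}_{i_0,j_0}(\tilde t)$, and using that by Lemma~\ref{lem:ErmentroutLemma2} all arguments of $H$ lie in $(-\pi/4, \pi/4) \subset (-\pi/2, \pi/2)$ where $H$ is strictly increasing, each bracket is non-negative. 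If $i_0 = N$, the extra term is strictly positive by the strict form of Lemma~\ref{lem:Horizontal}, yielding $\dot u_{N,j_0}(\tilde t) > 0$ and contradicting the failure. If $i_0 < N$, the requirement $\dot u_{i_0,j_0}(\tilde t) \leq 0$ together with non-negativity of every bracket forces each bracket to vanish; strict monotonicity of $H$ then propagates the equality $u_{i',j'}(\tilde t) = 0$ to every nearest-neighbour in the reduced lattice, and iterating this propagation along the connected reduced-lattice graph eventually reaches a cell in row $i = N$, where the strict positivity of the extra term produces the same contradiction (now via the sign of the first non-vanishing higher-order Taylor coefficient of $u_{i,j}$ near $\tilde t$).

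\textbf{Main obstacle.} The main obstacle is precisely this cascade-and-close step at interior cells: a single time derivative is insufficient to rule out $\tilde t$ being a failure time, so the argument must transport the equality $u_{i,j}(\tilde t) = 0$ across the reduced lattice until it meets the driving asymmetry at row $i = N$, and then read off the sign of the first non-vanishing higher-order derivative of $u_{i,j}$ at $\tilde t$, whose order grows with $N - i$ in the same way as in the initial $t = 0$ Taylor analysis. Once $u_{i,j}(t) \geq 0$ is secured for all $t > 0$, sending $t \to \infty$ and invoking the monotone convergence already established in Lemma~\ref{lem:ErmentroutLemma2} yields $\bar{\theta}^{(N)}_{i,j} \leq \bar{\theta}^{(N+1)}_{i,j}$ as claimed.
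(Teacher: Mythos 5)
Your proposal is essentially correct, but it takes a genuinely different route from the paper. You compare the two \emph{flows} $\theta^{(N)}_{i,j}(t)$ and $\theta^{(N+1)}_{i,j}(t)$ started from the common initial data $\pi/4$, prove the time-dependent ordering $u_{i,j}(t):=\theta^{(N+1)}_{i,j}(t)-\theta^{(N)}_{i,j}(t)>0$ via a first-failure-time argument, and let $t\to\infty$; the paper instead argues entirely at the level of the \emph{equilibria}. It assumes some $\bar{\theta}^{(N)}_{i_0,j_0}>\bar{\theta}^{(N+1)}_{i_0,j_0}$, proceeds column by column from $i_0=N$ inward, and at each stage subtracts the steady-state equations summed over a growing set of indices $\{\eta_k\}$: the interior couplings telescope away by the odd symmetry of $H$, the single surviving nonnegative term $H(\bar{\theta}^{(N+1)}_{N+1,j_0}-\bar{\theta}^{(N+1)}_{N,j_0})$ forces a new neighbour into the bad set at every step, and once the bad set exhausts the reduced system only boundary couplings remain, whose signs give the contradiction. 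Your version buys a cleaner logical structure --- it is a cooperative-system comparison principle, reusing the machinery of Lemmas \ref{lem:ErmentroutLemma1}, \ref{lem:ErmentroutLemma2} and \ref{lem:Horizontal} almost verbatim, and your cascade at $\tilde t$ closes with a first-derivative contradiction at row $i=N$ without needing the paper's column-by-column induction or its final boundary-term bookkeeping. The paper's version buys independence from the dynamics: it never needs the two trajectories to be comparable in time. Two small points to tighten in your write-up: the order of the first non-vanishing derivative of $u_{i,j}$ at $t=0$ is not simply $N-i$ (it is governed by the lattice distance from $(i,j)$ to the column-$N$ cells \emph{plus} the order at which the forcing $H(\theta^{(N+1)}_{N+1,j'}-\theta^{(N+1)}_{N,j'})$ first turns on, namely $N-j'$), though only positivity of the leading coefficient matters; and the appeal to higher-order Taylor coefficients near $\tilde t$ is unnecessary, since once the equalities $u_{i,j}(\tilde t)=0$ have propagated to row $N$ the strict inequality $\dot u_{N,j}(\tilde t)>0$ already contradicts $\dot u_{N,j}(\tilde t)\leq 0$.
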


To prove Lemma~\ref{lem:IncreasingWithLattice}, we will fix $N \geq 2$ and assume there exists an index $(i_0,j_0)$ of the reduced system $(\ref{ReducedFinitePhase})$ such that $\bar{\theta}^{(N)}_{i_0,j_0} > \bar{\theta}^{(N+1)}_{i_0,j_0}$. The proof is then carried out by examining the case when $(i_0,j_0)$ is an index from the last column (i.e. $i_0 = N$), deriving a contradiction and systematically decreasing the possible value of $i_0$ one step at a time to show that no such $(i_0,j_0)$ can exist. We show that the contradiction which is derived for the last column $(i_0 = N)$ is easily extended to derive a contradiction in the cases that $i_0 < N$. This will in turn exhaust all possibilities of indices $(i_0,j_0)$ in a finite number of steps, thus showing that $\bar{\theta}^{(N)}_{i,j} \leq \bar{\theta}^{(N+1)}_{i,j}$ for all $1 \leq j < i \leq N$. We begin with the following lemma detailing the case $i_0 = N$. 

\begin{lemma}[Case $i_0 = N$] \label{lem:i_0=N}
	Let us denote $\bar{\theta}^{(N)}_{i,j}$ as the solutions of the finite $2N\times 2N$ lattice in the reduced system for any $N \geq 2$. Then $\bar{\theta}^{(N)}_{N,j} \leq \bar{\theta}^{(N+1)}_{N,j}$ for all $1 \leq j < N$.
\end{lemma}

\begin{proof}	
	Let us fix $N \geq 2$ and assume that there exists an index $(N,j_0)$ of the reduced system such that $\bar{\theta}^{(N)}_{N,j_0} > \bar{\theta}^{(N+1)}_{N,j_0}$. The goal of this proof will be to derive a contradiction of this assumption, thus proving the lemma.
	
	To begin, consider the difference of the differential equations $\dot{\theta}^{(N+1)}_{N,j_0} - \dot{\theta}^{(N)}_{N,j_0}$ evaluated at the respective equilibrium points for that size of lattice. This gives
	\begin{equation} \label{Nj0Diff}
		\begin{aligned}
		\begin{split}
			0 = &H(\bar{\theta}^{(N+1)}_{N,j_0+1} - \bar{\theta}^{(N+1)}_{N,j_0}) - H(\bar{\theta}^{(N)}_{N,j_0+1} - \bar{\theta}^{(N)}_{N,j_0}) \\
			& + H(\bar{\theta}^{(N+1)}_{N,j_0-1} - \bar{\theta}^{(N+1)}_{N,j_0}) - H(\bar{\theta}^{(N)}_{N,j_0-1} - \bar{\theta}^{(N)}_{N,j_0}) \\
			& + H(\bar{\theta}^{(N+1)}_{N-1,j_0} - \bar{\theta}^{(N+1)}_{N,j_0}) - H(\bar{\theta}^{(N)}_{N-1,j_0} - \bar{\theta}^{(N)}_{N,j_0}) \\
			& + H(\bar{\theta}^{(N+1)}_{N+1,j_0} - \bar{\theta}^{(N+1)}_{N,j_0}), \\
		\end{split}
		\end{aligned}
	\end{equation} 
	where there is an odd number of terms since the $2N\times 2N$ lattice does not have a right input at the index $(N,j_0)$. From Lemma $\ref{lem:Horizontal}$ we have that $ \bar{\theta}^{(N+1)}_{N+1,j_0} - \bar{\theta}^{(N+1)}_{N,j_0} \geq 0$, which from $(\ref{Positivity})$ implies that
	\begin{equation} \label{NewPositivity}
		H(\bar{\theta}^{(N+1)}_{N+1,j_0} - \bar{\theta}^{(N+1)}_{N,j_0}) \geq 0.	
	\end{equation}
	 Then using $(\ref{NewPositivity})$ we may rearrange $(\ref{Nj0Diff})$ to find that 
	 \begin{equation} 
		\begin{aligned}
		\begin{split}
			0 \geq\ &H(\bar{\theta}^{(N+1)}_{N,j_0+1} - \bar{\theta}^{(N+1)}_{N,j_0}) - H(\bar{\theta}^{(N)}_{N,j_0+1} - \bar{\theta}^{(N)}_{N,j_0}) \\
			& + H(\bar{\theta}^{(N+1)}_{N,j_0-1} - \bar{\theta}^{(N+1)}_{N,j_0}) - H(\bar{\theta}^{(N)}_{N,j_0-1} - \bar{\theta}^{(N)}_{N,j_0}) \\
			& + H(\bar{\theta}^{(N+1)}_{N-1,j_0} - \bar{\theta}^{(N+1)}_{N,j_0}) - H(\bar{\theta}^{(N)}_{N-1,j_0} - \bar{\theta}^{(N)}_{N,j_0}). 
		\end{split}
		\end{aligned}
	\end{equation}
	 This in turn implies that at least one of the following must be true:
	 \begin{itemize}
	 	\item $H(\bar{\theta}^{(N+1)}_{N,j_0+1} - \bar{\theta}^{(N+1)}_{N,j_0}) - H(\bar{\theta}^{(N)}_{N,j_0+1} - \bar{\theta}^{(N)}_{N,j_0}) \leq 0$,
		\item $H(\bar{\theta}^{(N+1)}_{N,j_0-1} - \bar{\theta}^{(N+1)}_{N,j_0}) - H(\bar{\theta}^{(N)}_{N,j_0-1} - \bar{\theta}^{(N)}_{N,j_0}) \leq 0$,
		\item $H(\bar{\theta}^{(N+1)}_{N-1,j_0} - \bar{\theta}^{(N+1)}_{N,j_0}) - H(\bar{\theta}^{(N)}_{N-1,j_0} - \bar{\theta}^{(N)}_{N,j_0}) \leq 0$.
	 \end{itemize}
	 By definition of our coupling function $H$, we have that $H'(x) > 0$ for all $x \in (\frac{-\pi}{2},\frac{\pi}{2})$, and recalling from Lemma $\ref{lem:ErmentroutLemma2}$ that the maximal difference between any two elements of the reduced system is strictly bounded by $\pi/2$, we find that the above conditions reduce to having at least one of the following being true
	\begin{itemize} 
		\item $\bar{\theta}^{(N+1)}_{N,j_0+1} - \bar{\theta}^{(N+1)}_{N,j_0} \leq \bar{\theta}^{(N)}_{N,j_0+1} - \bar{\theta}^{(N)}_{N,j_0} \implies \bar{\theta}^{(N)}_{N,j_0+1} > \bar{\theta}^{(N+1)}_{N,j_0+1}$,
		\item $\bar{\theta}^{(N+1)}_{N,j_0-1} - \bar{\theta}^{(N+1)}_{N,j_0} \leq \bar{\theta}^{(N)}_{N,j_0-1} - \bar{\theta}^{(N)}_{N,j_0} \implies \bar{\theta}^{(N)}_{N,j_0-1} > \bar{\theta}^{(N+1)}_{N,j_0-1}$,
		\item $\bar{\theta}^{(N+1)}_{N-1,j_0} - \bar{\theta}^{(N+1)}_{N,j_0} \leq \bar{\theta}^{(N)}_{N-1,j_0} - \bar{\theta}^{(N)}_{N,j_0} \implies \bar{\theta}^{(N)}_{N-1,j_0} > \bar{\theta}^{(N+1)}_{N-1,j_0}$.  
	\end{itemize}
	 Here these conditions have been reduced by recalling that by assumption $\bar{\theta}^{(N)}_{i_0,j_0} > \bar{\theta}^{(N+1)}_{i_0,j_0}$. For simplicity we relabel the index $(N,j_0)$ as $\eta_1$ and let $\eta_2$ to be the nearest-neighbour of $\eta_1$ with the property that $\bar{\theta}^{(N)}_{\eta_{2}} > \bar{\theta}^{(N+1)}_{\eta_{2}}$. 
	
	We note that there are restrictions on the choice of $\eta_2$ in certain cases. That is, if $j_0 = 1$ then by the form of the solutions on the finite lattice we necessarily have
	\begin{equation}
		\bar{\theta}^{(N+1)}_{i_0,0} = \frac{\pi}{2} - \bar{\theta}^{(N+1)}_{i_0,1} > \frac{\pi}{2} - \bar{\theta}^{(N)}_{i_0,1} = \bar{\theta}^{(N)}_{i_0,0},
	\end{equation} 
	meaning that $\eta_2$ cannot be below $\eta_1$ when $j_0 = 1$. Furthermore, if $j_0 = i_0 - 1$ then $ \bar{\theta}^{(N+1)}_{i_0,i_0} = 0 =  \bar{\theta}^{(N)}_{i_0,i_0}$ showing that $\eta_2$ cannot be above or to the left of $\eta_1$ when $j_0 = i_0 - 1$. Since $N \geq 2$ we can always find an index $\eta_2$ with the prescribed properties in either situation.

	We now apply a similar argument to the difference of the differential equations
	\begin{equation}
	 	(\dot{\theta}^{(N+1)}_{\eta_{1}} + \dot{\theta}^{(N+1)}_{\eta_{2}}) - (\dot{\theta}^{(N)}_{\eta_{1}} + \dot{\theta}^{(N)}_{\eta_{2}}) 
	\end{equation} 
	evaluated at the respective equilibrium for that size of lattice. This gives
	\begin{equation} \label{Nj0Diff2}
		\begin{aligned}
		\begin{split}
		0 = &\sum_{\eta'_1} \bigg[H(\bar{\theta}^{(N+1)}_{\eta'_{1}} - \bar{\theta}^{(N+1)}_{\eta_{1}}) - H(\bar{\theta}^{(N)}_{\eta'_{1}} - \bar{\theta}^{(N)}_{\eta_{1}})\bigg] \\
			&+ \sum_{\eta'_2} \bigg[H(\bar{\theta}^{(N+1)}_{\eta'_{2}} - \bar{\theta}^{(N+1)}_{\eta_{2}}) - H(\bar{\theta}^{(N)}_{\eta'_{2}} - \bar{\theta}^{(N)}_{\eta_{2}})\bigg],
		\end{split}
		\end{aligned}
	\end{equation}
	where we have paired the elements by their index. This expression can be simplified slightly by recalling that $\eta_1$ and $\eta_2$ are nearest-neighbours in the lattice. Therefore, the terms
	\begin{equation}
		H(\bar{\theta}^{(N+1)}_{\eta_1} - \bar{\theta}^{(N+1)}_{\eta_2})
	\end{equation}
	and
	\begin{equation}
		H(\bar{\theta}^{(N+1)}_{\eta_2} - \bar{\theta}^{(N+1)}_{\eta_1})	
	\end{equation}
	both appear in this sum. Thus, using the odd symmetry of the coupling function, these terms eliminate themselves from the sum. Similarly, the terms $-H(\bar{\theta}^{(N)}_{\eta_1} - \bar{\theta}^{(N)}_{\eta_2})$ and $-H(\bar{\theta}^{(N)}_{\eta_2} - \bar{\theta}^{(N)}_{\eta_1})$ cancel each other in the sum by the odd symmetry of the coupling function $H$. 
	
	Then $(\ref{Nj0Diff2})$ again has the term $H(\bar{\theta}^{(N+1)}_{N+1,j_0} - \bar{\theta}^{(N+1)}_{N,j_0})$ being nonnegative, coming from the index $\eta_1$ again. Rearranging $(\ref{Nj0Diff2})$ as above then shows that at least one of
	\begin{equation}
		H(\bar{\theta}^{(N+1)}_{\eta'_{1}} - \bar{\theta}^{(N+1)}_{\eta_{1}}) - H(\bar{\theta}^{(N)}_{\eta'_{1}} - \bar{\theta}^{(N)}_{\eta_{1}}) \leq 0	
	\end{equation}
	or
	\begin{equation}
		H(\bar{\theta}^{(N+1)}_{\eta'_{2}} - \bar{\theta}^{(N+1)}_{\eta_{2}}) - H(\bar{\theta}^{(N)}_{\eta'_{2}} - \bar{\theta}^{(N)}_{\eta_{2}}) \leq 0	
	\end{equation}
	must hold for a nearest-neighbour of either $\eta_1$ or $\eta_2$. Let us denote $\eta_3$ to be this index. Notice that $\eta_3 \neq \eta_1,\eta_2$ since the coupling terms between these neighbouring cells has been eliminated by the odd symmetry of the coupling function. Now for $i = 1$ or $2$ we have that
	\begin{equation}
		H(\bar{\theta}^{(N+1)}_{\eta_{3}} - \bar{\theta}^{(N+1)}_{\eta_{i}}) - H(\bar{\theta}^{(N)}_{\eta_{3}} - \bar{\theta}^{(N)}_{\eta_{i}}) \leq 0,
	\end{equation}
	then from the argument laid out for $\eta_2$ above, we have that
	\begin{equation}
		\bar{\theta}^{(N+1)}_{\eta_{3}} - \bar{\theta}^{(N+1)}_{\eta_{i}} \leq \bar{\theta}^{(N)}_{\eta_{3}} - \bar{\theta}^{(N)}_{\eta_{i}} \implies \bar{\theta}^{(N+1)}_{\eta_{3}} < \bar{\theta}^{(N)}_{\eta_{3}},  	
	\end{equation}
	simply by recalling that $\bar{\theta}^{(N+1)}_{\eta_{i}} < \bar{\theta}^{(N)}_{\eta_{i}}$ by definition of $\eta_1$ and $\eta_2$. Finally, the choice of $\eta_3$ is restricted to those indices $(i,j)$ of the reduced system such that neither $j =0$ nor $i = j$, by the previous discussion for the possibilities for $\eta_2$.

We continue this process inductively by considering the differential equations 
\begin{equation}
	\sum_{k=1}^m \dot{\theta}^{(N+1)}_{\eta_{k}} - \sum_{k=1}^m\dot{\theta}^{(N)}_{\eta_{k}}
\end{equation}
evaluated at the respective equilibrium points, for any $m\geq 1$. Since the differential equation of the element indexed by $\eta_1$ is always considered in this sum we will always have a nonnegative term in $H(\bar{\theta}^{(N+1)}_{N+1,j_0} - \bar{\theta}^{(N+1)}_{N,j_0})$, allowing us to determine that there is a nearest-neighbour, $\eta_{m+1}$, of one of the $\eta_k$ such that $\bar{\theta}^{(N)}_{\eta_{m+1}} > \bar{\theta}^{(N+1)}_{\eta_{m+1}}$, via the same process outlined for $m = 1,2$. As before, the sum will eliminate any coupling present between neighbouring indices, meaning that at each step the cardinality of the set of indices $\{\eta_k\}_{k=1}^m$ increases by one, and it can never be the case that $\eta_{m+1} = (i,j)$ is such that $j = 0$ or $i = j$. In this way we are restricted in our choices to those which are contained within the reduced system. This process is illustrated in Figure $\ref{fig:IncreasingAlgorithm}$ for visual reference. 	

	\begin{figure} 
		\centering
		\includegraphics[width = 7cm]{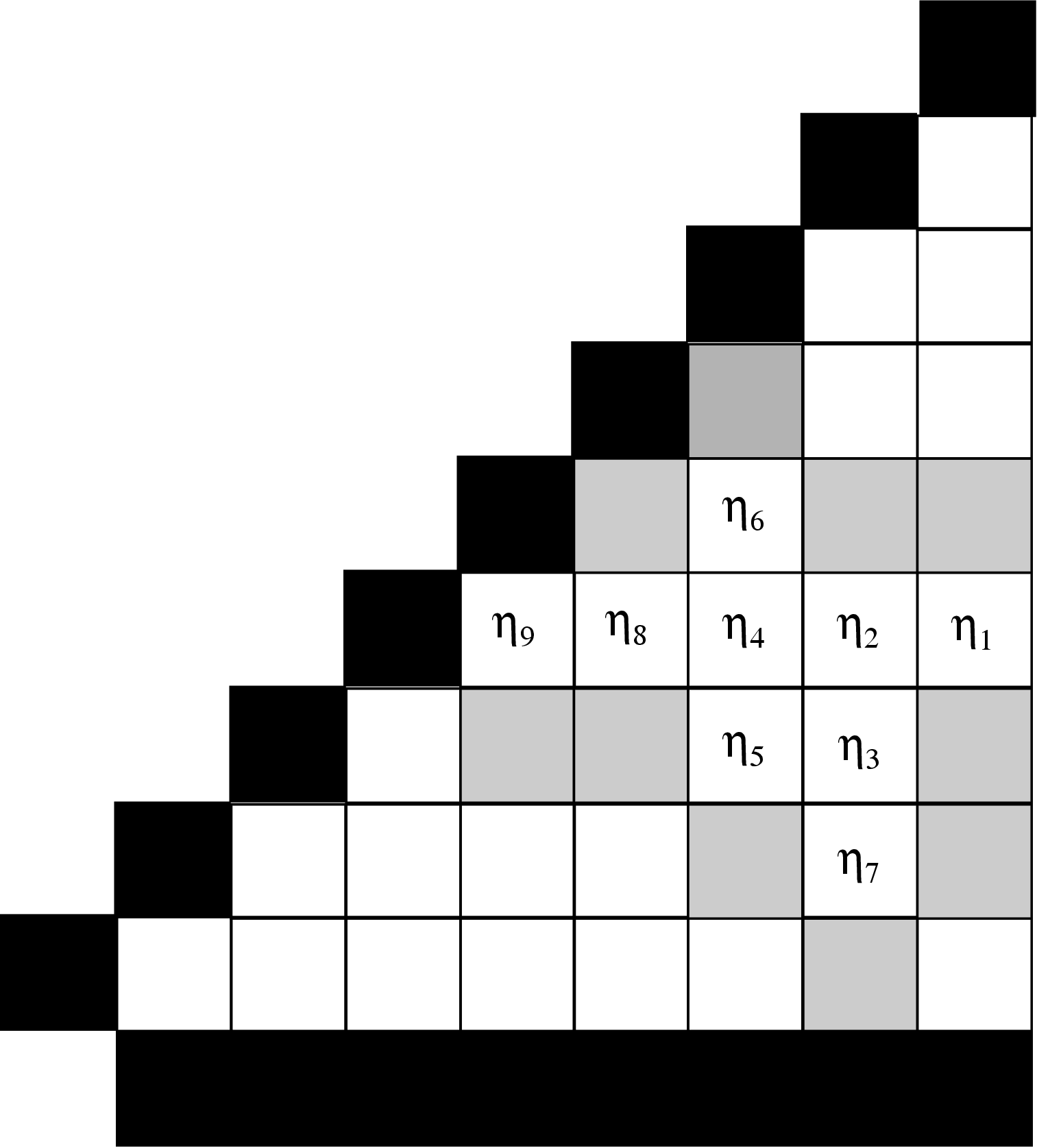}
		\caption{The reduced system with $N = 8$ and a possible collection of the indices $\{\eta_k\}_{k=1}^9$ with the elligible choices for $\eta_{10}$ shaded in. The black cells represent the boundaries of the reduced system which cannot be included in the sequence $\{\eta_k\}_{k=1}^{N(N+2)/2}$.}
		\label{fig:IncreasingAlgorithm}
	\end{figure}  

	We have already noted that the choices of the $\eta_k$ are restricted to those in the reduced system, which is finite. Therefore, this process eventually terminates showing that for every element of the reduced system we have $\bar{\theta}^{(N)}_{i,j} > \bar{\theta}^{(N+1)}_{i,j}$. At this final step we then consider 
	\begin{equation}
		\sum_{k=1}^{N(N-1)/2} \dot{\theta}^{(N+1)}_{\eta_{k}} - \sum_{k=1}^{N(N-1)/2}\dot{\theta}^{(N)}_{\eta_{k}}
	\end{equation} 
	evaluated at the respective equilibria of that lattice size to see that only the interactions with the boundaries of the reduced system remain. That is, we obtain 
	\begin{equation} \label{ExteriorCouplingOnly}
		\begin{split}
		\begin{aligned}
			0 = &(H(\bar{\theta}^{(N)}_{2,1}) - H(\bar{\theta}^{(N+1)}_{2,1})) + 2\sum_{k=2}^{N-1} \bigg(H(\bar{\theta}^{(N)}_{k,k-1}) - H(\bar{\theta}^{(N+1)}_{k,k-1})\bigg)\\ 
			&+ (H(\bar{\theta}^{(N)}_{N,N-1}) - H(\bar{\theta}^{(N+1)}_{N,N-1})) + \sum_{k=1}^{N}\bigg(H(\frac{\pi}{2} - 2\bar{\theta}^{(N+1)}_{k,1}) - H(\frac{\pi}{2} - 2\bar{\theta}^{(N)}_{k,1})\bigg)\\
			&+ \sum_{k=1}^{N}H(\bar{\theta}^{(N+1)}_{N+1,k} - \bar{\theta}^{(N+1)}_{N,k}),
		\end{aligned}
		\end{split}
	\end{equation} 
	where the first three groups of terms come from the coupling with the diagonal $(i,i)$ terms, the fourth grouping coming from the coupling with the $j=0$ row and the final grouping of terms is from the coupling to the right of the $N$th column. We have used the special forms $(\ref{SpecialForm1})$ and $(\ref{SpecialForm2})$ of the boundary interactions to obtain these reductions. Now from our inductive proceedure we have shown that $\bar{\theta}^{(N)}_{i,j} > \bar{\theta}^{(N+1)}_{i,j}$ for every element of the reduced system, and from the fact that $H$ is increasing on $(\frac{-\pi}{2},\frac{\pi}{2})$ we have that 
	\begin{equation}
		\begin{split}
		\begin{aligned}
			&H(\bar{\theta}^{(N)}_{2,1}) - H(\bar{\theta}^{(N+1)}_{2,1}) > 0, \\
			&\sum_{k=2}^{N-1} \bigg(H(\bar{\theta}^{(N)}_{k,k-1}) - H(\bar{\theta}^{(N+1)}_{k,k-1})\bigg) > 0, \\
			&H(\bar{\theta}^{(N)}_{N,N-1}) - H(\bar{\theta}^{(N+1)}_{N,N-1}) > 0, \\
			&\sum_{k=1}^{N}\bigg(H(\frac{\pi}{2} - 2\bar{\theta}^{(N+1)}_{k,1}) - H(\frac{\pi}{2} - 2\bar{\theta}^{(N)}_{k,1})\bigg) > 0.	
		\end{aligned}
		\end{split}
	\end{equation}
	Furthermore, 
	\begin{equation}
		\sum_{k=1}^{N}H(\bar{\theta}^{(N+1)}_{N+1,k} - \bar{\theta}^{(N+1)}_{N,k}) \geq 0	
	\end{equation}
	from our results in Lemma $\ref{lem:Horizontal}$, showing that the right hand side of $(\ref{ExteriorCouplingOnly})$ is strictly positive, which is impossible. Therefore, we have derived a contradiction, thus proving the lemma.    	
	\end{proof} 
	
	Now that we have proven that $i_0 \neq N$, we will move one column to the left and consider the case when $i_0 = N -1$.
	
	\begin{lemma}[Case $i_0 = N-1$] \label{lem:i_0=N-1} 
	Let us denote $\bar{\theta}^{(N)}_{i,j}$ as the solutions of the finite $2N\times 2N$ lattice in the reduced system for any $N \geq 2$. Then $\bar{\theta}^{(N)}_{N-1,j} \leq \bar{\theta}^{(N+1)}_{N-1,j}$ for all $1 \leq j < N-1$.
	\end{lemma}
	
	\begin{proof}
	Let us proceed as in a similar manner to that of the proof of Lemma~\ref{lem:i_0=N}. That is, let us fix $N \geq 2$ and assume that there exists an index $(N-1,j_0)$ of the reduced system such that $\bar{\theta}^{(N)}_{N-1,j_0} > \bar{\theta}^{(N+1)}_{N-1,j_0}$. We will again obtain a contradiction to this assumption, and in turn prove the lemma.
	
	We begin by noting that if $\bar{\theta}^{(N)}_{N-1,j_0} > \bar{\theta}^{(N+1)}_{N-1,j_0}$ then necessarily 
	\begin{equation} \label{N-1Condition}
		H(\bar{\theta}^{(N+1)}_{N,j_0} - \bar{\theta}^{(N+1)}_{N-1,j_0}) - H(\bar{\theta}^{(N)}_{N,j_0} - \bar{\theta}^{(N)}_{N-1,j_0}) \geq 0. 
	\end{equation} 
	Indeed, if we assume that this is not true then it is the case that
	\begin{equation}
		\bar{\theta}^{(N+1)}_{N,j_0} - \bar{\theta}^{(N+1)}_{N-1,j_0} < \bar{\theta}^{(N)}_{N,j_0} - \bar{\theta}^{(N)}_{N-1,j_0} \implies \bar{\theta}^{(N)}_{N,j_0} > \bar{\theta}^{(N+1)}_{N,j_0}, 	
	\end{equation}
which Lemma~\ref{lem:i_0=N} has already shown to be impossible. With this in mind we can proceed as in the proof of Lemma~\ref{lem:i_0=N} by considering the differential equations 
\begin{equation}
	\dot{\theta}_{N-1,j_0}^{(N+1)} - \dot{\theta}_{N-1,j_0}^{(N)}
\end{equation}
evaluated at their respective equilibria. This gives
 \begin{equation} \label{N-1j0Diff}
		\begin{aligned}
		\begin{split}
			0 =\ &H(\bar{\theta}^{(N+1)}_{N-1,j_0+1} - \bar{\theta}^{(N+1)}_{N-1,j_0}) - H(\bar{\theta}^{(N)}_{N-1,j_0+1} - \bar{\theta}^{(N)}_{N-1,j_0}) \\
			& + H(\bar{\theta}^{(N+1)}_{N-1,j_0-1} - \bar{\theta}^{(N+1)}_{N,j_0}) - H(\bar{\theta}^{(N)}_{N-1,j_0-1} - \bar{\theta}^{(N)}_{N-1,j_0}) \\
			& + H(\bar{\theta}^{(N+1)}_{N-2,j_0} - \bar{\theta}^{(N+1)}_{N-1,j_0}) - H(\bar{\theta}^{(N)}_{N-2,j_0} - \bar{\theta}^{(N)}_{N-1,j_0}) \\
			& + H(\bar{\theta}^{(N+1)}_{N,j_0} - \bar{\theta}^{(N+1)}_{N-1,j_0}) - H(\bar{\theta}^{(N)}_{N,j_0} - \bar{\theta}^{(N)}_{N-1,j_0}). 
		\end{split}
		\end{aligned}
	\end{equation}
One will notice a slight difference to the case when $i_0 = N$ in that now we have an even number of terms in the equation, but from $(\ref{N-1Condition})$ we may rearrange $(\ref{N-1j0Diff})$ to be handled in a similar way to the case when $i_0 = N$ by
 \begin{equation}
		\begin{aligned}
		\begin{split}
			0 \geq\ &H(\bar{\theta}^{(N+1)}_{N-1,j_0+1} - \bar{\theta}^{(N+1)}_{N-1,j_0}) - H(\bar{\theta}^{(N)}_{N-1,j_0+1} - \bar{\theta}^{(N)}_{N-1,j_0}) \\
			& + H(\bar{\theta}^{(N+1)}_{N-1,j_0-1} - \bar{\theta}^{(N+1)}_{N,j_0}) - H(\bar{\theta}^{(N)}_{N-1,j_0-1} - \bar{\theta}^{(N)}_{N-1,j_0}) \\
			& + H(\bar{\theta}^{(N+1)}_{N-2,j_0} - \bar{\theta}^{(N+1)}_{N-1,j_0}) - H(\bar{\theta}^{(N)}_{N-2,j_0} - \bar{\theta}^{(N)}_{N-1,j_0}).
		\end{split}
		\end{aligned}
	\end{equation}
Thus the term $(\ref{N-1Condition})$ acts as the single nonnegative term in the difference when $i_0 = N$. Proceeding as above we can find a nearest-neighbour of $(N-1,j_0)$ to which the element at that index for the $2N\times 2N$ lattice is greater than the element at that index for the $2(N+1)\times 2(N+1)$ lattice. This leads to the same chain of steps as in the case when $i_0 = N$, where we now find that the indices which give the desired conditions are limited to not only those in the reduced system, but to those not in the $N$th column. 

Again this procedure terminates in a finite number of steps, showing that $\bar{\theta}_{i,j}^{(N+1)} < \bar{\theta}_{i,j}^{(N)}$ for all $1 \leq j < i \leq N-1$. Then as above, the differential equations
\begin{equation}
	\sum_{1 \leq j < i \leq N-1} \dot{\theta}^{(N+1)}_{i,j} - \sum_{1 \leq j < i \leq N-1}\dot{\theta}^{(N)}_{i,j}
\end{equation} 
evaluated at their respective equilibria leads to a cancelling of all interior coupling terms, thus leaving only connections with the boundary:
	\begin{equation} \label{ExteriorCouplingOnly2}
		\begin{split}
		\begin{aligned}
			0 = &(H(\bar{\theta}^{(N)}_{2,1}) - H(\bar{\theta}^{(N+1)}_{2,1})) + 2\sum_{k=2}^{N-2} \bigg(H(\bar{\theta}^{(N)}_{k,k-1}) - H(\bar{\theta}^{(N+1)}_{k,k-1})\bigg)\\ 
			&+ (H(\bar{\theta}^{(N)}_{N-1,N-2}) - H(\bar{\theta}^{(N+1)}_{N-1,N-2})) + \sum_{k=1}^{N-1}\bigg(H(\frac{\pi}{2} - 2\bar{\theta}^{(N+1)}_{k,1}) - H(\frac{\pi}{2} - 2\bar{\theta}^{(N)}_{k,1})\bigg)\\
			&+ \sum_{k =1}^{N-1} \bigg(H(\bar{\theta}^{(N+1)}_{N,k} - \bar{\theta}^{(N+1)}_{N-1,k}) - H(\bar{\theta}^{(N)}_{N,k} - \bar{\theta}^{(N)}_{N-1,k})\bigg).
		\end{aligned}
		\end{split}
	\end{equation} 
In exactly the same way as the case $i_0 = N$ we have that 	
	\begin{equation}
		\begin{split}
		\begin{aligned}
			&H(\bar{\theta}^{(N)}_{2,1}) - H(\bar{\theta}^{(N+1)}_{2,1}) > 0, \\
			&\sum_{k=2}^{N-2} \bigg(H(\bar{\theta}^{(N)}_{k,k-1}) - H(\bar{\theta}^{(N+1)}_{k,k-1})\bigg) > 0, \\
			&H(\bar{\theta}^{(N)}_{N-1,N-2}) - H(\bar{\theta}^{(N+1)}_{N-1,N-2}) > 0, \\
			&\sum_{k=1}^{N-1}\bigg(H(\frac{\pi}{2} - 2\bar{\theta}^{(N+1)}_{k,1}) - H(\frac{\pi}{2} - 2\bar{\theta}^{(N)}_{k,1})\bigg) > 0.	
		\end{aligned}
		\end{split}
	\end{equation}
Furthermore, following $(\ref{N-1Condition})$ one has that
\begin{equation}
	\sum_{k =1}^{N-1} \bigg(H(\bar{\theta}^{(N+1)}_{N,k} - \bar{\theta}^{(N+1)}_{N-1,k}) - H(\bar{\theta}^{(N)}_{N,k} - \bar{\theta}^{(N)}_{N-1,k})\bigg) \geq 0,	
\end{equation}	
therefore, upon putting this all together, the right hand side of $(\ref{ExteriorCouplingOnly2})$ is strictly positive. This gives a contradiction, thus showing that $i_0 \neq N-1$. 
\end{proof}

Having now proven Lemmas~\ref{lem:i_0=N} and \ref{lem:i_0=N-1}, we have that the remaining cases are quite similar. Therefore, we state these remaining cases as the proof of Lemma~\ref{lem:IncreasingWithLattice}.

\begin{proof}[Proof of Lemma~\ref{lem:IncreasingWithLattice}]

Fix $N \geq 2$ and assume there exists an index $(i_0,j_0)$ of the reduced system $(\ref{ReducedFinitePhase})$ such that $\bar{\theta}^{(N)}_{i_0,j_0} > \bar{\theta}^{(N+1)}_{i_0,j_0}$. From Lemmas~\ref{lem:i_0=N} and \ref{lem:i_0=N-1} we have that $i_0 \neq N,N-1$. We then continue with the method of proof used in Lemma~\ref{lem:i_0=N-1} by showing that if $i_0 \neq N - k$, for some $1 \leq k < N$, then $i_0 \neq N- k -1$ by merely applying the same arguments which were used in proving that $i_0 \neq N -1$ from the result that $i_0 \neq N$. This leads to a process of systematically decreasing $i_0$ by one each step and repeating a similar argument used in the cases $i_0 = N, N-1$ to see that there is no such column in the reduced system which can contain an element that satisfies $\bar{\theta}^{(N)}_{i,j} > \bar{\theta}^{(N+1)}_{i,j}$. This then completes the proof of the lemma since there is only a finite number of columns to check.   
\end{proof} 

This leads to the proof of Theorem $\ref{thm:InfinitePhase}$.

\begin{proof}[Proof of Theorem $\ref{thm:InfinitePhase}$]
	Lemma $\ref{lem:IncreasingWithLattice}$	shows that by observing the value of the equilibrium solution for each lattice size at a single index in the reduced system we form an increasing sequence. By taking any $N\geq 2$ we can identify the equilibrium solution inside the reduced system of the $2N\times 2N$ lattice as an element of the reduced system in the infinite lattice by appending the elements
	\begin{equation}
		\bar{\theta}^{(N)}_{i,j} = 0
	\end{equation}
	for all $i > N$ and $1 \leq j < i$. We have now created a sequence of elements in the reduced system of the infinite lattice which is pointwise increasing and each element is bounded above by $\pi/4$, therefore this sequence converges pointwise as $N \to \infty$. 
	
	Let us write
	\begin{equation}
		\bar{\theta}_{i,j} := \lim_{N \to \infty} \bar{\theta}^{(N)}_{i,j},\ \ \ \ \ 1\leq j < i.
	\end{equation}  
	By the continuity of the differential equations at each index of the reduced system we see that these elements are themselves an equilibrium of the reduced system of the infinite lattice and that $\bar{\theta}_{i,j} \in (0,\frac{\pi}{4}]$ for all $1 \leq j < i$. Moreover, we can apply the symmetries of the finite lattices shown in Figure $\ref{fig:ErmentroutDiagram}$ to extend this equilibrium in the reduced system to an equilibrium of the entire two-dimensional lattice via the same extensions outlined by Ermentrout and Paullet in $(\ref{SymmetryExtensions})$. This therefore proves Theorem $\ref{thm:InfinitePhase}$.
\end{proof}

\begin{rmk}
It was noted in \cite{ErmentroutSpiral} that a rotating wave solution on the lattice is such that the phase-lags over any concentric ring about the centre four cell ring increase from $0$ up to $2\pi$. Although not explicitly stated in our result, this is indeed the case. Furthermore, such a result was implied in Ermentrout and Paullet's work on the finite lattice, although it is notably absent from their work.  We state the following lemma without proof.
\begin{lemma} \label{lem:Vertical} 
	Let $N \geq 2$ and finite. If we denote $\bar{\theta}_{i,j}$ as the solutions on the finite $2N\times 2N$ lattice in the reduced system, then $\bar{\theta}_{i,j} \geq \bar{\theta}_{i,j+1}$ for all $1 \leq j < i \leq N$.   	
\end{lemma}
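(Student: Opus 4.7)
The plan is to mirror the argument of Lemma $\ref{lem:Horizontal}$, but for the vertical direction rather than the horizontal one. I would aim for the stronger transient statement $\theta_{i,j}(t) > \theta_{i,j+1}(t)$ for all $t > 0$ and all admissible indices of the reduced system, and then pass to the limit $t \to \infty$ to recover the claimed weak inequality $\bar{\theta}_{i,j} \geq \bar{\theta}_{i,j+1}$ on the equilibrium.

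First I would Taylor expand $\theta_{i,j}(t) - \theta_{i,j+1}(t)$ about $t = 0$ using the derivative conditions $(\ref{DerivativeCondition1})$--$(\ref{DerivativeCondition2})$ of Lemma $\ref{lem:ErmentroutLemma1}$. Since $(i,j+1)$ is one lattice step closer to the diagonal than $(i,j)$, its first nonvanishing derivative at the origin is of order $i-j-1$, whereas that of $(i,j)$ is of order $i-j$. Consequently the difference has leading term $-\frac{a_{i,j+1}}{(i-j-1)!} t^{i-j-1}$ with $-a_{i,j+1} > 0$, so the desired strict inequality holds on a common interval $(0,t_0)$ across the finitely many indices of the reduced system.

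Next I would suppose for contradiction that the strict ordering first fails at a finite $\hat{t} > 0$, say $\theta_{i_0,j_0}(\hat{t}) = \theta_{i_0,j_0+1}(\hat{t})$ at some index $(i_0,j_0)$, with $\theta_{i,j}(\hat{t}) \geq \theta_{i,j+1}(\hat{t})$ elsewhere. The crucial selection is to choose $(i_0,j_0)$ with the smallest $j_0$ in its column exhibiting equality, so that either $\theta_{i_0,j_0-1}(\hat{t}) > \theta_{i_0,j_0}(\hat{t})$ strictly (when $j_0 \geq 2$), or the boundary relation $\theta_{i_0,0} = \frac{\pi}{2} - \theta_{i_0,1}$ combined with $\theta_{i_0,1}(\hat{t}) < \frac{\pi}{4}$ from Lemma $\ref{lem:ErmentroutLemma2}$ supplies the strict inequality (when $j_0 = 1$). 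Computing $\dot{\theta}_{i_0,j_0}(\hat{t}) - \dot{\theta}_{i_0,j_0+1}(\hat{t})$, the mutual coupling cancels since $H(0)=0$; the contributions from $(i_0 \pm 1, j_0)$ paired with $(i_0 \pm 1, j_0+1)$ are nonnegative by strict monotonicity of $H$ on $(-\frac{\pi}{2},\frac{\pi}{2})$ (all relevant arguments lying in this interval by Lemma $\ref{lem:ErmentroutLemma2}$) together with the vertical inequality preserved at $\hat{t}$ in columns $i_0 \pm 1$; the $(i_0,j_0+2)$ contribution is nonnegative via odd symmetry applied to $\theta_{i_0,j_0+2}(\hat{t}) \leq \theta_{i_0,j_0+1}(\hat{t})$; and the $(i_0,j_0-1)$ contribution is strictly positive by $(\ref{Positivity})$ and our selection. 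A Taylor expansion of the difference about $\hat{t}$ then makes it negative just to the left of $\hat{t}$, which by an intermediate value argument against the initial interval $(0,t_0)$ contradicts the minimality of $\hat{t}$.

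The main obstacle I anticipate is the bookkeeping of the boundary cases in which the special forms $(\ref{SpecialForm1})$--$(\ref{SpecialForm2})$ replace the generic four-neighbour equation: $j_0 = 1$ produces an additional $H(\frac{\pi}{2} - 2\theta_{i_0,1}(\hat{t})) > 0$ from the coupling with the $j=0$ row; $j_0 + 1 = i_0 - 1$ produces an extra $+2H(\theta_{i_0,i_0-1}(\hat{t})) > 0$ arising from the two diagonal neighbours of the cell $(i_0,i_0-1)$; and $i_0 = N$ simply suppresses the right-hand horizontal term. A direct inspection of each case shows that the replacements either add new strictly positive contributions or remove terms without changing the sign balance, so the strict positivity of $\dot{\theta}_{i_0,j_0}(\hat{t}) - \dot{\theta}_{i_0,j_0+1}(\hat{t})$ is preserved. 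Passing $t \to \infty$ in the resulting transient inequality then yields the lemma.
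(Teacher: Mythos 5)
Your proposal is correct and follows exactly the route the paper intends: the paper omits the proof of Lemma \ref{lem:Vertical}, stating only that it is carried out by an almost identical process to Lemma \ref{lem:Horizontal}, and your two-step argument (Taylor expansion of the difference at $t=0$ using $(\ref{DerivativeCondition1})$--$(\ref{DerivativeCondition2})$, followed by a minimal-time contradiction via the sign of $\dot{\theta}_{i_0,j_0}(\hat{t})-\dot{\theta}_{i_0,j_0+1}(\hat{t})$) is precisely that process, with the $j=0$ boundary row supplying the anchoring strict inequality that the diagonal supplies in the horizontal case. The boundary bookkeeping you sketch is consistent with this template, so the proposal matches the paper's (omitted) proof in both structure and substance.
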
	
\noindent The proof of Lemma $\ref{lem:Vertical}$ is carried out in an almost identical process to that of Lemma $\ref{lem:Horizontal}$ and is therefore omitted from this work. Letting $N \to \infty$ we see that the inequalities along both the rows and columns given by Lemmas $\ref{lem:Horizontal}$ and $\ref{lem:Vertical}$, respectively, remain true and once the extensions from the reduced system to the whole lattice are applied we obtain a true rotating wave solution.    
\end{rmk}

\section{Persistence Into $\alpha > 0$} \label{sec:Persistence}

We now return to the full system $(\ref{ComplexLattice})$. Upon reintroducing the amplitude component to the equations, it was pointed out in Section $\ref{sec:Model}$ that by solving the equations
\begin{equation} \label{0Eqns2}
	\begin{split}
	\begin{aligned}
		&0 = \alpha\sum_{i',j'} (r_{i',j'}\cos(\theta_{i',j'} - \theta_{i,j}) - r_{i,j}) + r_{i,j}(1 - r_{i,j}^2), \\
		&0 =  \sum_{i',j'} \frac{r_{i',j'}}{r_{i,j}}\sin(\theta_{i',j'} - \theta_{i,j}), 
	\end{aligned}
	\end{split}		
\end{equation}   
for the variables $r = \{r_{i,j}\}_{(i,j)\in\mathbb{Z}^2}$ and $\theta = \{\theta_{i,j}\}_{(i,j)\in\mathbb{Z}^2}$ we can obtain solutions to the Ginzburg-Landau system $(\ref{ComplexLattice})$ which are oscillating with a frequency of $2\pi/\omega$. Therefore, the solution to the phase equations $(\ref{PhaseLDS})$ given in Section $\ref{sec:InfiniteLattice}$ provides a rotating wave solution when $\alpha = 0$, and we now wish to extend this solution into $\alpha > 0$. Typically this would be achieved via an Implicit Function Theorem argument, but there are some important technical hurdles which must be addressed before a direct application of the theorem. In this section we will outline some of the largest of these technical hurdles and dedicate the sequel to this work to overcoming these issues. Some details will be left out of this section since we are only attempting to show the reader why the problem of extending our solution into positive $\alpha$ is not a straightforward application of the Implicit Function Theorem.  

We will begin by connecting our work with that undertaken for finite square lattices, as well as formulate a conjecture into the persistence of the solution in the anti-continuum limit of $(\ref{ComplexLattice})$. Following this brief discussion we turn to the major technical issues which must first be addressed if one wishes to obtain a persistence result for rotating waves in the case when $\alpha > 0$ is sufficiently small. In the second part of this work we will address these technical issues and demonstrate exactly how one overcomes them.

\subsection{Connection With the Finite Lattice}

In $\cite{ErmentroutLambdaOmega}$ the existence of rotating wave solutions to systems of type $(\ref{ComplexLattice})$ on finite square lattices are proven. In this case the interactions with boundary elements are absent from the sum notation containing nearest-neighbour interactions, similar to the finite phase system $(\ref{FinitePhase})$. In this case the boundary conditions are said to reflect Neumann boundary conditions coming from the model prior to spatial discretization. The authors show that the solution originating at $\alpha = 0$ cannot persist for all $\alpha > 0$ and must meet the trivial solution at a bifurcation point. Here we will briefly summarize how these results could help to understand the persistence of our solution off of the anti-continuum limit. 

It is shown that upon linearizing a finite square lattice system analogous to $(\ref{ComplexLattice})$ with these Neumann boundary conditions about the trivial equilibrium leads to eigenvalues with real parts given by $1 + \alpha \nu$, where $\nu \leq 0$ is an eigenvalue of the discretized Laplacian operator on the finite lattice. Clearly any bifurcation must take place when $1 + \alpha \nu = 0$, thus we can solve for the values of $\alpha$ which lead to bifurcations from the trivial equilibrium. One always has that $\nu = 0$ is an eigenvalue, but in this case we cannot trigger a bifurcation by varying $\alpha$. The second smallest eigenvalue is given by
\begin{equation}
	\nu = 2\bigg[\cos\bigg(\frac{\pi}{N}\bigg) - 1\bigg], 
\end{equation}  
where we assume the square lattice has $N\times N$ elements. Hence, the minimal value of $\alpha$ which can lead to a bifurcation from the trivial equilibrium is given as a function of the size of the lattice by
\begin{equation}
	\alpha^*(N) = \frac{1}{2\bigg[1 - \cos\bigg(\frac{\pi}{N}\bigg)\bigg]}.
\end{equation}
This value $\alpha^*(N)$ guarantees a minimal range of existence of a rotating wave solution given by $\alpha \in [0,\alpha^*(N)]$. One can further see that $\alpha^*(N) \to \infty$ monotonically as $N\to \infty$, and naturally leads to the following conjecture regarding the persistence of our rotating wave solution found in this work. 

\begin{conj}
	The rotating wave solution to $(\ref{ComplexLattice})$ obtained in this work for $\alpha = 0$ persists for all $\alpha > 0$. 
\end{conj}

Although the work on the finite lattice is quite convincing of the persistence of the solutions into $\alpha > 0$, it by no means should be substituted for a complete analytical investigation. It should be pointed out that the work of $\cite{ErmentroutLambdaOmega}$ benefits greatly from the fact that the system is finite-dimensional and thus is not presented with some technical problems unique to the infinite-dimensional setting studied herein. In the following subsection we detail exactly what some of these technicalities are.

\subsection{Technical Hurdles of the Infinite-Dimensional Lattice}  

To begin, solving $(\ref{0Eqns2})$ can be interpreted abstractly as obtaining roots to the mapping $F = (F^1,F^2)^T$ of the form
\begin{equation} \label{FMapping}
	\begin{split}
		F_{i,j}^1(\alpha, r,\theta) = \alpha\sum_{i',j'}& [r_{i',j'}\cos(\theta_{i',j'} - \theta_{i,j}) - r_{i,j}] + r_{i,j}(1 - r^2_{i,j}), \\
		F_{i,j}^2(\alpha, r,\theta) = \sum_{i',j'}& \frac{r_{i',j'}}{r_{i,j}}\sin(\theta_{i',j'} - \theta_{i,j}),
	\end{split}
\end{equation} 
where $(i,j) \in \mathbb{Z}^2$. Moreover, denoting ${\bf 1} = \{1\}_{(i,j)\in\mathbb{Z}^2}$ and $\bar{\theta} = \{\theta_{i,j}\}_{(i,j)\in\mathbb{Z}^2}$ to be the rotating wave solution guaranteed by Theorem $\ref{thm:InfinitePhase}$ we have $F(0, {\bf 1},\bar{\theta}) = 0$. Therefore, using $\alpha$ as a system parameter to apply the Implicit Function Theorem we require the following two ingredients:
\begin{itemize}
	\item $F$ is a well-defined, smooth mapping between Banach spaces,
	\item The Fr\'echet derivative of $F$ with respect to $(r,\theta)$ at the point $(\alpha, r, \theta) = (0, {\bf 1},\bar{\theta})$ is an isomorphism of Banach spaces.
\end{itemize} 
Let us now discuss the various ways in which these conditions can fail to be met with the mapping $(\ref{FMapping})$.  

The natural Banach space setting which one would employ to examine $(\ref{FMapping})$ would be the spaces of sequences indexed by $\mathbb{Z}^2$. The typically studied examples of such spaces are denoted $\ell^p(\mathbb{Z}^2)$ and are given by    
\begin{equation} \label{ellpSpace}
	\ell^p(\mathbb{Z}^2) = \{ \{x_{i,j}\}_{(i,j)\in\mathbb{Z}^2}\ |\ \sum_{(i,j)\in \mathbb{Z}^2} |x_{i,j}|^p < \infty\},	
\end{equation}  
for any $p\in [1,\infty)$ and $\ell^\infty(\mathbb{Z}^2)$ the space of all bounded sequences indexed by $\mathbb{Z}^2$. Something that is key to our work here is that no two $\ell^p(\mathbb{Z}^2)$ spaces are isomorphic unless they are the same space. One should also note that any sequence belonging to $\ell^p(\mathbb{Z}^2)$ for $1 \leq p < \infty$ must exhibit some algebraic decay as $|i| + |j| \to \infty$, as this will be crucial to our understanding of the mapping $F$.   

Now, from the form of the mapping $(\ref{FMapping})$ for any $\alpha \in \mathbb{R}$ we have 
\begin{equation}
	\begin{split}
		F_{i,j}^1(\alpha, {\bf 1},\bar{\theta}) = \alpha\sum_{i',j'}& [\cos(\bar{\theta}_{i',j'} - \bar{\theta}_{i,j}) - 1], \\
		F_{i,j}^2(\alpha, {\bf 1},\bar{\theta}) = 0.
	\end{split}
\end{equation} 
Notice that for $\alpha \neq 0$, we have that $F^1(\alpha, {\bf 1},\bar{\theta})$ does not necessarily vanish. Hence, without having an efficient estimate on the decay of $|\bar{\theta}_{i',j'} - \bar{\theta}_{i,j}|$ as $|i| + |j| \to \infty$, one cannot obtain any neighbourhood of $\alpha = 0$ in which $F^1(\alpha, {\bf 1},\bar{\theta})$ maps into $\ell^p(\mathbb{Z}^2)$ for $1 \leq p < \infty$. This only leaves the possibility of working with $\ell^\infty(\mathbb{Z}^2)$.

One can see that the partial Fr\'echet derivative of $F^1$ with respect to $\theta$ evaluated at the point $(\alpha, r, \theta) = (0, {\bf 1},\bar{\theta})$ will vanish due to the linear dependence of the coupling terms on $\alpha$, and therefore the full Fr\'echet derivative of $F$ with respect to $(r,\theta)$, denoted $DF$, can be interpreted as a lower-triangular matrix of linear operators. Hence, a necessary condition for $DF$ to be a Banach space isomorphism is that the partial Fr\'echet derivative of $F^1$ with respect to $r$ at the point $(\alpha, r, \theta) = (0, {\bf 1},\bar{\theta})$ be a Banach space isomorphism. Moreover, coupling the previous arguments to make $F^1$ a well-defined operator between sequence spaces with the fact that no two $\ell^p(\mathbb{Z}^2)$ spaces are isomorphic unless they are the same space necessitates that we have the image of $F^1$ and the domain of $r$ be $\ell^\infty(\mathbb{Z}^2)$. We now discuss the limitations of this requirement. 

With $r$ belonging to the space $\ell^\infty(\mathbb{Z}^2)$ we arrive at a similar condition on $F^2$ to be well-defined as we had encountered for $F^1$. That is, for any $r \in \ell^\infty(\mathbb{Z}^2)$ with nonzero elements we have
\begin{equation}
	F_{i,j}^2(\alpha, r,\bar{\theta}) = \sum_{i',j'} \frac{r_{i',j'}}{r_{i,j}}\sin(\bar{\theta}_{i',j'} - \bar{\theta}_{i,j}),	
\end{equation}
where we note the independence of $F^2$ with respect to $\alpha$. When $r$ is not taken to be a constant sequence we find ourselves in a similar position to the case of $F^1$ since we require an understanding of the decay of $|\bar{\theta}_{i',j'} - \bar{\theta}_{i,j}|$ as $|i| + |j| \to \infty$ in order to have $F^2$ map into $\ell^p(\mathbb{Z}^2)$ for $1 \leq p < \infty$. Hence the image of $F^2$ must also be taken to be $\ell^\infty(\mathbb{Z}^2)$, and the diagonal nature of $DF$ forces the domain of $\theta$ to also be $\ell^\infty(\mathbb{Z}^2)$ for the same reasons as the case of the mapping component $F^1$.   

Now that we have narrowed down the possibilities for Banach spaces in which the mapping $F$ is well-defined, let us now turn to the problem of invertibility of the Fr\'echet derivative. As previously noted, $DF$ can be interpreted as a lower-triangular matrix whose entries are Banach space operators. In this way invertibility of $DF$ entirely falls upon having the diagonal entries of this matrix being continuously invertible Banach space operators (Banach space isomorphisms). The specific problem which will arise in our situation is best understood once we introduce some nomenclature with regards to the spectrum of an operator. A linear operator is said to be a {\em Fredholm operator} if its range is closed and its kernel and cokernel are finite dimensional. Then a complex number $\lambda$ is said to belong to the {\em essential spectrum} of a linear operator $T$ on a Banach space $X$ if $T - \lambda I$ is not a Fredholm operator. Here we use $I$ to denote the identity function on the Banach space $X$. That is, an element of the essential spectrum is an element which is not an `isolated eigenvalue' and hence the non-invertibility cannot be overcome by simply moving to a quotient space.     

Returning to our situation, let us denote $L:\ell^\infty(\mathbb{Z}^2) \to \ell^\infty(\mathbb{Z}^2)$ to be the partial Fr\'echet derivative of $F^2$ with respect to $\theta$ evaluated at the point $(\alpha, r, \theta) = (0, {\bf 1},\bar{\theta})$. Then we have that $L$ acts on the sequences $x = \{x_{i,j}\}_{(i,j)\in\mathbb{Z}^2}$ as
\begin{equation}
	[Lx]_{i,j} = \sum_{i',j'} \cos(\bar{\theta}_{i',j'} - \bar{\theta}_{i,j})(x_{i',j'} - x_{i,j}),
\end{equation} 
for all $(i,j)\in\mathbb{Z}^2$. One should note that all nearest-neighbour interactions are such that $|\bar{\theta}_{i',j'} - \bar{\theta}_{i,j}| \leq \frac{\pi}{2}$, thus making $\cos(\bar{\theta}_{i',j'} - \bar{\theta}_{i,j}) \geq 0$ for all $(i,j)\in\mathbb{Z}^2$. More precisely, with the exception of the 'centre' four cells at $(i,j) = (0,0), (0,1), (1,0), (1,1)$ all nearest-neighbour interactions are such that $|\bar{\theta}_{i',j'} - \bar{\theta}_{i,j}| < \frac{\pi}{2}$, whereas the coupling between any two of the four centre cells is exactly $\pi/2$. This therefore makes $\cos(\bar{\theta}_{i',j'} - \bar{\theta}_{i,j}) > 0$ for all $(i,j), (i',j') \neq (0,0), (0,1), (1,0), (1,1)$. Furthermore, one can see that the translational symmetry of $F$ with respect to $\theta$ endows the operator $L$ with a nontrivial kernel spanned by the constant sequences. Although this makes our operator $L$ not invertible, it has been successfully overcome in the finite dimensional setting $\cite{ErmentroutLambdaOmega}$. Having moved to infinite dimensions the analysis becomes significantly more complicated since one is not necessarily able to quotient out this translational symmetry. The following proposition states that $L$ is not a Fredholm operator, and hence $\lambda = 0$ belongs to the essential spectrum of $L$.

\begin{proposition} \label{prop:ellinfty}
	$L:\ell^\infty(\mathbb{Z}^2) \to \ell^\infty(\mathbb{Z}^2)$ is not a Fredholm operator.
\end{proposition}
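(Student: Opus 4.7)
The plan is to show that $L$ is not bounded below as an operator on $\ell^\infty(\Lambda)$, which immediately rules out the existence of a bounded (two-sided) inverse. Concretely, I will exhibit a sequence $\{x^{(n)}\} \subset \ell^\infty(\Lambda)$ with $\|x^{(n)}\|_\infty = 1$ for every $n$ while $\|Lx^{(n)}\|_\infty \to 0$. The natural candidate is a tent function anchored at the excised vertex $(i_0,j_0)$:
\begin{equation*}
x^{(n)}_{i,j} := \min\left\{1,\; \frac{|i-i_0|+|j-j_0|}{n}\right\},\qquad (i,j)\in\Lambda.
\end{equation*}
This construction is the key place where the $\ell^\infty$ setting differs from $\ell^2$: such slowly varying tents are certainly not square-summable, but they sit comfortably in $\ell^\infty(\Lambda)$ with norm exactly $1$ (since $\Lambda$ is infinite, there are always vertices at lattice distance at least $n$ from $(i_0,j_0)$ where $x^{(n)}$ saturates at $1$).

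The two pieces of $L = A + P$ can then be estimated separately. For the graph Laplacian piece, nearest-neighbour indices $(i,j),(i',j') \in \Lambda$ have lattice distances to $(i_0,j_0)$ differing by at most one, so $|x^{(n)}_{i',j'} - x^{(n)}_{i,j}| \leq 1/n$. Because every local phase difference $\bar{\phi}_{i',j'}-\bar{\phi}_{i,j}$ lies in $[-\pi/2,\pi/2]$ by the construction of the rotating wave, the weights $H'(\bar{\phi}_{i',j'}-\bar{\phi}_{i,j})$ are uniformly bounded by some $M > 0$, and each vertex has at most four neighbours. Thus $\|Ax^{(n)}\|_\infty \leq 4M/n$. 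For the diagonal part, $P$ is supported on the (at most four) nearest neighbours of $(i_0,j_0)$, where $x^{(n)}$ takes the value $1/n$, so $\|Px^{(n)}\|_\infty \leq M/n$. Combining gives $\|Lx^{(n)}\|_\infty \leq 5M/n \to 0$, which contradicts any hypothetical lower bound $\|Lx\|_\infty \geq c\|x\|_\infty$.

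The conceptual heart of the argument is the failure, in $\ell^\infty$, of the obstruction used in Proposition \ref{prop:l2Stability}: there the putative ground state at eigenvalue zero would have to be a positive constant function on $\Lambda$, which does not belong to $\ell^2(\Lambda)$ but sits comfortably in $\ell^\infty(\Lambda)$. The diagonal term $P$ still prevents the constant function itself from lying in the kernel of $L$, but a tent that vanishes at the excised site and transitions to the constant $1$ on a scale $n$ simultaneously drives both $Ax^{(n)}$ and $Px^{(n)}$ to zero. The only subtlety I anticipate is verifying that the two competing requirements, namely slow variation (to make $A$ small) and smallness near $(i_0,j_0)$ (to make $P$ small), can be balanced at the same $O(1/n)$ rate; the lattice-distance tent profile achieves this cleanly because the differences at any edge and the value at each neighbour of $(i_0,j_0)$ are both of size $1/n$.
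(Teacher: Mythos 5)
Your proposal is correct and is essentially identical to the paper's own argument: the paper constructs exactly the same lattice-distance tent function (value $k/n$ at vertices $k \leq n$ steps from $(i_0,j_0)$, value $1$ beyond) and derives the same $O(1/n)$ bounds on $Ax^{(n)}$ and $Px^{(n)}$ to conclude that $L$ is not bounded below. No gaps.
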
 

\begin{proof}
	We recall that the norm on $\ell^\infty(\mathbb{Z}^2)$ is given by
	\begin{equation}
		||x||_\infty = \sup_{(i,j)\in\mathbb{Z}^2} |x_{i,j}| < \infty. 
	\end{equation}
	Then to show that $L$ is not a Fredholm operator, we show that it does not have closed range. To do so we show that it is not bounded below by demonstrating that there does not exist a $\delta > 0$ such that $||Lx||_\infty \geq \delta||x||_\infty$ for all $x \in \ell^\infty(\mathbb{Z}^2)$. This is carried out by constructing a sequence $\{x^{(n)}\}_{n=1}^\infty \subset \ell^\infty(\mathbb{Z}^2)$ such that $||x^{(n)}||_\infty = 1$ for all $n\geq 1$ but $||Lx^{(n)}||_\infty \to 0$ as $n \to \infty$. 
	
	The sequence of vectors is constructed in the following way: Let $(i_0,j_0) \in \mathbb{Z}^2$ be an arbitrary index and fix $n\geq 1$. Begin by setting $x^{(n)}_{i_0,j_0} = 0$. For those indices which are one step along the integer lattice $\mathbb{Z}^2$ (nearest-neighbours) to $(i_0,j_0)$ we set the elements of the vector with these indices to $1/n$. Then we set the eight elements which are two steps from the index $(i_0,j_0)$ (nearest-neighbours of the nearest-neighbours) to $2/n$. Next we set the twelve elements which are three steps from the index $(i_0,j_0)$ to $3/n$. We continue this pattern so that for any $k \leq n$ we set those elements which are exactly $k$ steps from the index $(i_0,j_0)$ to $k/n$. For the remaining elements of whose indices lie at more than $n$ steps from the index $(i_0,j_0)$ we set to $1$. Two vectors of this form are shown in Figure $\ref{fig:ellinfty}$ for $n=4,5$ to visualize the form and demonstrate how the vectors change as $n$ increases. 
	
\begin{figure} 
	\centering
		\includegraphics[width = 6cm]{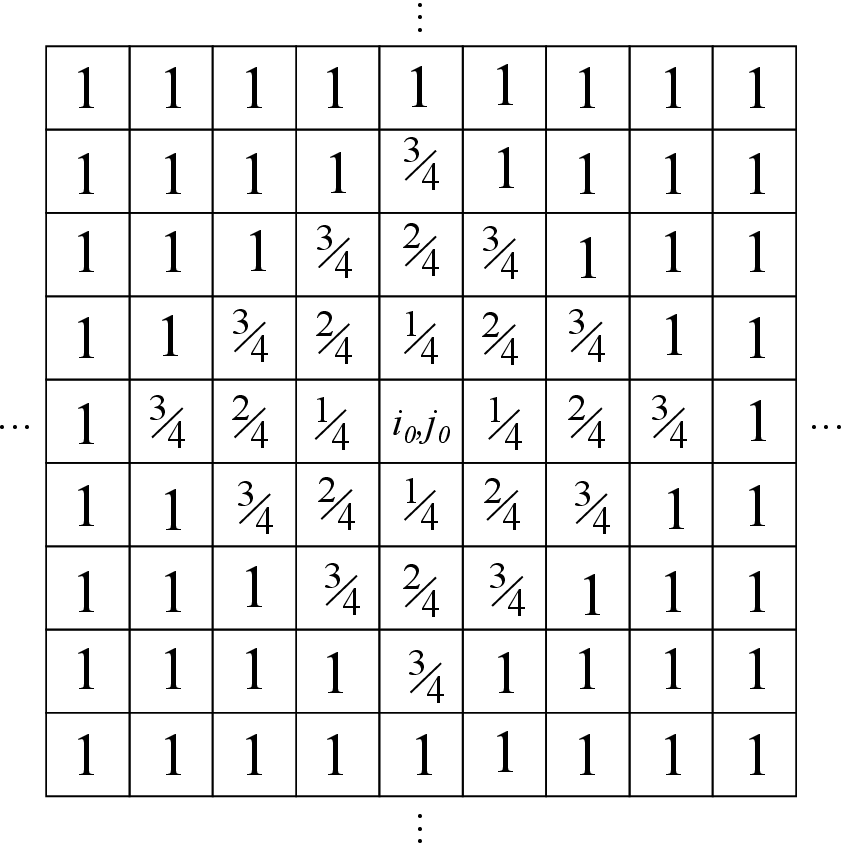}\ \ \ \ \ \
		\includegraphics[width = 6cm]{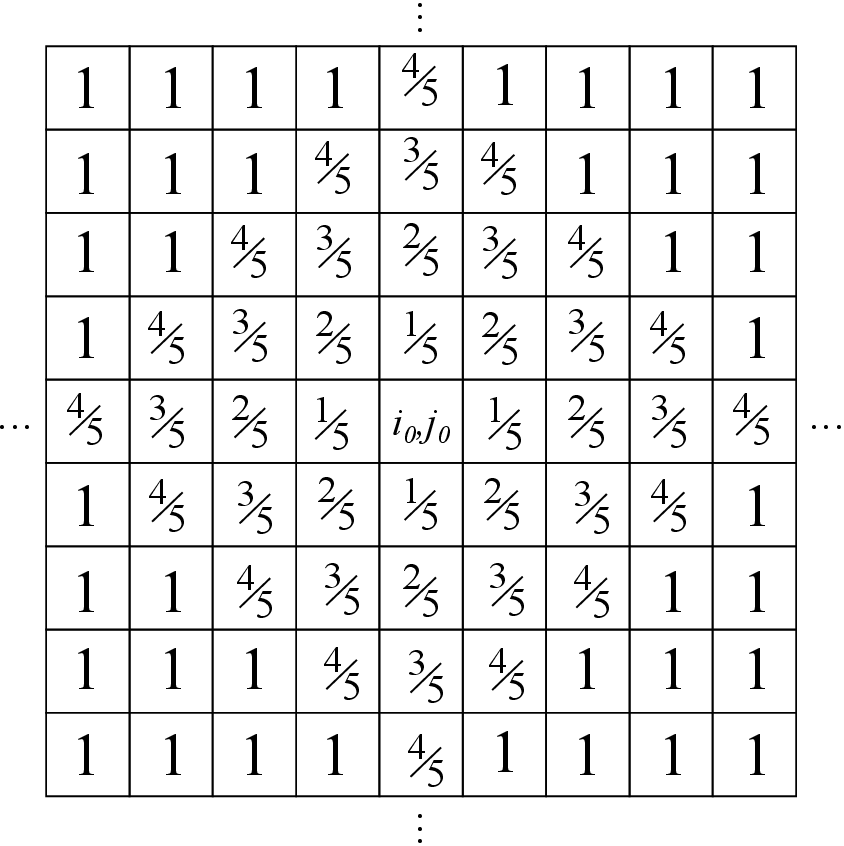}
		\caption{A visualization of two vectors from the sequence $\{x^{(n)}\}_{n=1}^\infty$ from the proof of Proposition $\ref{prop:ellinfty}$. Here we have $n=4,5$ centred at the index $(i_0,j_0)$.}
		\label{fig:ellinfty}	
\end{figure} 
	
	Then clearly for each $n$ this vector has norm $1$ in $\ell^\infty(\mathbb{Z}^2)$, but one should notice that by construction we have
	\begin{equation}
		|x_{i',j'} - x_{i,j}| \leq \frac{1}{n}
	\end{equation}
	for any $(i,j)$ and a nearest-neighbour $(i',j')$ in $\mathbb{Z}^2$. Furthermore,
	\begin{equation} \label{BddBelow}
		|[Lx^{(n)}]_{i,j}| \leq \sum_{(i',j')} |\cos(\bar{\theta}_{i',j'} - \bar{\theta}_{i,j})||x^{(n)}_{i',j'} - x^{(n)}_{i,j}| \leq \frac{4}{n}
	\end{equation}
	for all $(i,j)\in\mathbb{Z}^2$. Therefore taking the supremum of $(\ref{BddBelow})$ over all $(i,j)\in\mathbb{Z}^2$ gives that $||Lx^{(n)}||_\infty \leq 4/n$ and hence $||Lx^{(n)}||_\infty \to 0$ as $n \to \infty$. This shows that $L$ is not bounded below and completes the proof.      
\end{proof} 

With the proof of Proposition $\ref{prop:ellinfty}$ one can now see the various technical hurdles which must be addressed if one wishes to extend the rotating wave solution of this work at $\alpha = 0$ into positive values of $\alpha$. In the sequel to this work we will demonstrate exactly how to overcome these issues by applying a non-standard Implicit Function Theorem to a mapping whose roots lie in one-to-one correspondence with those of $F$ in $(\ref{FMapping})$. This application is highly nontrivial and requires the establishment of several minor results along the way and is therefore left to a subsequent study.

\section*{Acknowledgements} 

This work was supported by an Ontario Graduate Scholarship while at the University of Ottawa. The author is very thankful to Benoit Dionne and Victor LeBlanc for their careful reading of the work, correcting errors and making improvements to properly convey the results.

\end{document}